\pgfplotsset{width=8cm,compat=1.9}
\def\R{{\mathbb R}}
\def\N{\mathbb{N}}
\def\C{\mathbb{C}}
\def\Z{\mathbb{Z}}
\def\T{\mathbb{T}}
\def\F{\mathbb{F}}
\def\ii{\mathrm{i}}
\newtheorem{prop}{\bf Proposition}[section]
\newtheorem{thm}[prop]{\bf Theorem}
\newtheorem{cor}[prop]{\bf Corollary}
\newtheorem{lem}[prop]{\bf Lemma}
\newtheorem{rmk}[prop]{\it Remark}
\newtheorem{ques}[prop]{\bf Question}
\begin{document}

\title{A connection between Lipschitz and Kazhdan constants for groups of homeomorphisms of the real line}
\titlemark{Lipschitz and Kazhdan constants for groups of homeomorphisms of $\R$}



\emsauthor{1}{
	\givenname{Ignacio}
	\surname{Vergara}
	\mrid{1126379}
	\orcid{0000-0001-7144-4272}}{I.~Vergara}

\Emsaffil{1}{
	\department{Departamento de Matem\'atica y Ciencia de la Computaci\'on}
	\organisation{Universidad de Santiago de Chile}
	\address{Las Sophoras 173}
	\zip{9170020}
	\city{Estaci\'on Central}
	\country{Chile}
	\affemail{ign.vergara.s@gmail.com}}

\classification[37E05, 22F10, 20F60]{22D55}

\keywords{Property (T), Bi-Lipschitz homeomorphisms of the real line, orderable groups}

\begin{abstract}
We exhibit an obstruction for groups with Relative Property (T) to act on the real line by bi-Lipschitz homeomorphisms. This condition is expressed in terms of the Lipschitz and Kazhdan constants associated to finite generating subsets. As an application, we obtain an explicit lower bound for the Lipschitz constants associated to actions of the semidirect product $\F_2\ltimes\Z^2$. We also obtain an upper bound for the Kazhdan constants of pairs of orderable groups, depending only on the cardinal of the generating subset.
\end{abstract}

\maketitle



\section{Introduction}
A finitely generated group $G$ has Property (T) if there is a finite subset $S\subset G$ and a constant $\varepsilon>0$ such that, for every unitary representation $\pi$ of $G$ on a Hilbert space $\mathcal{H}$,
\begin{align*}
\max_{g\in S}\|\pi(g)\xi-\xi\|\geq \varepsilon\|\xi\|\quad\forall\xi\in(\mathcal{H}^\pi)^\perp,
\end{align*}
where $(\mathcal{H}^\pi)^\perp$ denotes the orthogonal complement of the subspace of $\pi$-invariant vectors. We refer the reader to \cite{BedlHVa} for a detailed account on Property (T).

It is an open problem to determine whether a group with Property (T) can act faithfully on $\R$ by orientation-preserving homeomorphisms. Such actions do exist, however, for certain groups satisfying a relative version of Property (T). This is the case of the semidirect product $\F_2\ltimes\Z^2$, which we discuss in extensive detail below.

For a finitely generated group $G$, the existence of a faithful action $G\curvearrowright\R$ by orientation-preserving homeomorphisms is equivalent to the existence of a left-invariant order on $G$; see \cite[\S 1.1.3]{DeNaRi}. Furthermore, it was shown in \cite[Theorem 8.5]{DKNP} that, in this case, there is always an action by bi-Lipschitz homeomorphisms with bounded displacement.

Let $\operatorname{Homeo}_+(\R)$ denote the group of orientation-preserving (i.e. increasing) homeomorphisms of $\R$. We say that $f\in\operatorname{Homeo}_+(\R)$ is bi-Lipschitz if there is a constant $L\geq 1$ such that
\begin{align*}
\frac{1}{L}|x-y|\leq|f(x)-f(y)|\leq L|x-y|\quad\forall x,y\in\R.
\end{align*}
The bi-Lipschitz constant $\operatorname{BiLip}(f)$ is the infimum of all $L\geq 1$ such that the condition above holds.

\begin{rmk}\label{Rmk_Lip_D}
Let $f\in\operatorname{Homeo}_+(\R)$ be a bi-Lipschitz homoeomorphism, and let $Df$ denote its derivative function, which is defined almost everywhere. Then
\begin{align*}
\operatorname{BiLip}(f)=\max\left\{\|Df\|_\infty, \|Df^{-1}\|_\infty\right\},
\end{align*}
where $\|\,\cdot\,\|_\infty$ stands for the essential supremum norm.
\end{rmk}

We say that $f$ has bounded displacement if
\begin{align*}
\operatorname{disp}(f)=\sup_{x\in\R}|f(x)-x|<\infty.
\end{align*}
We let $\operatorname{BiLip}_+^{\mathrm{bd}}(\R)$ denote the subgroup of $\operatorname{Homeo}_+(\R)$ given by all bi-Lipschitz functions with bounded displacement. The fact that it is a group follows from observing that
\begin{align*}
\operatorname{BiLip}(fg)&\leq\operatorname{BiLip}(f)\operatorname{BiLip}(g), & \operatorname{disp}(fg)&\leq\operatorname{disp}(f)+\operatorname{disp}(g),
\end{align*}
and
\begin{align*}
\operatorname{BiLip}(f^{-1})&=\operatorname{BiLip}(f), & \operatorname{disp}(f^{-1})=\operatorname{disp}(f).
\end{align*}
We are interested in finding obstructions for groups with Property (T) to be realised as subgroups of $\operatorname{BiLip}_+^{\mathrm{bd}}(\R)$. The following result seems to be known to experts; we include a proof of it in Section \ref{S_limits} for the sake of completeness.

\begin{prop}\label{Prop_inf_ab_q}
Let $G$ be a group generated by a finite subset $S$. Assume that there is a sequence of injective group homomorphisms $\theta_n:G\to\operatorname{BiLip}_+^{\mathrm{bd}}(\R)$ such that
\begin{align*}
\lim_{n\to\infty}\operatorname{BiLip}(\theta_n(g))=1
\end{align*}
for all $g\in S$. Then there is a nontrivial group homomorphism from $G$ to $(\R,+)$. In particular, $G$ has an infinite abelian quotient.
\end{prop}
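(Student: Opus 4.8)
The plan is to extract a nontrivial homomorphism $G\to(\R,+)$ from the "derivative at infinity" of the maps $\theta_n(g)$, using the bounded displacement hypothesis to control compactness. For $f\in\operatorname{BiLip}_+^{\mathrm{bd}}(\R)$, the natural candidate for a real-valued quasi-character is the logarithmic displacement of $f$ near $\pm\infty$; since $f$ is bi-Lipschitz, $\log(f(x)/x)$ stays bounded as $x\to+\infty$, but it need not converge. A cleaner invariant is the \emph{translation-like} quantity $\rho(f)=\lim_{x\to+\infty}(f(x)-x)$ if it exists; but bounded displacement alone does not force convergence. So instead I would work with a limit along an ultrafilter (or a subsequence), which is where the asymptotic condition $\operatorname{Lip}(\theta_n(g))\to 1$ does the real work.

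First I would fix a finite generating set $S$ of $G$ and, for each $n$, consider the homomorphism $\theta_n$. The hypothesis $\operatorname{Lip}(\theta_n(g))\to 1$ means (by Remark \ref{Rmk_Lip_D}) that the derivatives $D\theta_n(g)$ are squeezed toward the constant function $1$ in $L^\infty$, so $\theta_n(g)$ is becoming an isometry of $\R$, i.e. close to a translation $x\mapsto x+c_n(g)$. Concretely, $|\theta_n(g)(x)-\theta_n(g)(0)-x|\le(\operatorname{Lip}(\theta_n(g))-1)|x|\to 0$ uniformly on compact sets. Set $c_n(g)=\theta_n(g)(0)$, which is uniformly bounded in $n$ by the bounded-displacement hypothesis applied at the point $0$ (more carefully: $|c_n(g)|=|\theta_n(g)(0)-0|\le\sup_x|\theta_n(g)(x)-x|$, but this bound may depend on $n$; to fix this, note $\theta_n(g)$ and its inverse have Lipschitz constant tending to $1$, so the displacement $\sup_x|\theta_n(g)(x)-x|$ need not be bounded — here I would instead normalise, or argue differently). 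The robust route: consider the difference homomorphism. Define $c_n\colon G\to\R$ by $c_n(g)=\theta_n(g)(0)$. Then for $g,h\in G$, $c_n(gh)=\theta_n(g)(\theta_n(h)(0))=\theta_n(g)(c_n(h))$, and since $\theta_n(g)$ is $\operatorname{Lip}(\theta_n(g))$-bi-Lipschitz fixing-ish the origin, $c_n(gh)=c_n(g)+c_n(h)+\delta_n(g,h)$ with $|\delta_n(g,h)|\le(\operatorname{Lip}(\theta_n(g))-1)|c_n(h)|$.

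Now I would normalise: replace $c_n$ by $c_n/M_n$ where $M_n=\max_{g\in S}|c_n(g)|$ (if $M_n=0$ for infinitely many $n$ then all $\theta_n(g)$, $g\in S$, fix $0$; combined with the asymptotic-isometry property this forces $\theta_n$ to be trivial for large $n$, contradicting injectivity — so $M_n>0$ eventually). The normalised functions $\tilde c_n=c_n/M_n$ take values in $[-C,C]$ on the ball of radius $k$ in $G$ for a constant depending only on $k$ and $S$ (using the cocycle-type identity above and that $\operatorname{Lip}(\theta_n(g))-1\to 0$); so along a subsequence (or an ultralimit) $\tilde c_n(g)\to\phi(g)$ for every $g\in G$. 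Passing to the limit in $c_n(gh)=c_n(g)+c_n(h)+\delta_n(g,h)$, and checking that $\delta_n(g,h)/M_n\to 0$ because $(\operatorname{Lip}(\theta_n(g))-1)\to 0$ while $|c_n(h)|/M_n$ stays bounded, gives $\phi(gh)=\phi(g)+\phi(h)$: a homomorphism $G\to(\R,+)$. It is nontrivial because $\max_{g\in S}|\tilde c_n(g)|=1$ for all $n$, hence $\max_{g\in S}|\phi(g)|=1\neq 0$. Finally, $\phi(G)$ is a nontrivial subgroup of $\R$, hence infinite, giving the infinite abelian quotient $G/\ker\phi$.

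The main obstacle I anticipate is the compactness/boundedness step: making sure the normalised evaluations $\tilde c_n(g)$ stay bounded for \emph{every} fixed $g$, not merely for $g\in S$. This requires an inductive control along word length using the estimate $|\delta_n(g,h)|\le(\operatorname{Lip}(\theta_n(g))-1)|c_n(h)|$; the worry is that the word-length-$k$ bound on $|c_n(g)|/M_n$ could blow up with $k$ faster than $(\operatorname{Lip}(\theta_n(g))-1)$ shrinks. One has to be careful that the relevant products of $\operatorname{Lip}$-constants over a word stay bounded — this is where the hypothesis "$\operatorname{Lip}(\theta_n(g))\to 1$ for \emph{each} $g$" (applied to the finitely many generators) combined with submultiplicativity $\operatorname{Lip}(fg)\le\operatorname{Lip}(f)\operatorname{Lip}(g)$ must be used to ensure that on each fixed ball of $G$ the Lipschitz constants are uniformly bounded (indeed tend to $1$), so the error terms vanish in the limit uniformly on that ball. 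Once that bookkeeping is done, extracting the limit and verifying it is a homomorphism is routine.
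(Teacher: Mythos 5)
Your approach is close in spirit to the paper's: both extract a real-valued homomorphism as an ultralimit, with the hypothesis $\operatorname{Lip}(\theta_n(g))\to 1$ used to kill the error terms in the cocycle identity. The paper conjugates each $\theta_n$ by a homothety so that $\max_{g\in S}\sup_x|\theta_n(g)(x)-x|=1$ and then takes the ultralimit of the entire action, observing it is by translations and evaluating at $0$; you instead work directly with the evaluations $c_n(g)=\theta_n(g)(0)$ and renormalise by $M_n=\max_{g\in S}|c_n(g)|$. Your cocycle-error estimate $|\delta_n(g,h)|\le(\operatorname{Lip}(\theta_n(g))-1)|c_n(h)|$ is correct, and so is the word-length bookkeeping: if $L_n=\max_{g\in S}\operatorname{Lip}(\theta_n(g))$, then $|c_n(h)|/M_n\le(L_n^{k}-1)/(L_n-1)\to k$ for $h$ of word length $k$, so the errors do vanish in the ultralimit.

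The genuine gap is in the nontriviality step. Your argument that $M_n>0$ eventually is wrong: if $\theta_n(g)(0)=0$ for every $g\in S$ (hence for every $g\in G$), then $M_n=0$, but this does not force $\theta_n$ to be trivial. Concretely, take $G=\Z$ and $\theta_n(1)(x)=x+n^{-1}\sin x$: each $\theta_n$ is injective, $\operatorname{Lip}(\theta_n(1))=n/(n-1)\to 1$, and the displacement is bounded, yet $\theta_n(g)(0)=0$ for all $g$ and all $n$. So $M_n\equiv 0$ and your construction degenerates. The fix is to replace $0$ by a base point $x_n$ chosen near a witness of the displacement, e.g.\ so that $\max_{g\in S}|\theta_n(g)(x_n)-x_n|\ge\tfrac12\max_{g\in S}\sup_y|\theta_n(g)(y)-y|$ (which is positive since $\theta_n$ is injective and $G\neq\{e\}$), and work with $c_n(g)=\theta_n(g)(x_n)-x_n$; the same estimate $|\theta_n(g)(y+x_n)-\theta_n(g)(x_n)-y|\le(\operatorname{Lip}(\theta_n(g))-1)|y|$ makes everything else go through verbatim. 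It is worth noting that the paper's own nontriviality step, which asserts $\lim_{\mathfrak U}|\theta_n(g_0)(x)-x|=1$ after normalising only the supremum of the displacement, faces exactly the same subtlety: in the example above (after the homothety-conjugation of the paper one gets $\tilde\theta_n(1)(x)=x+\sin(x/n)$) the witness of the sup-displacement escapes to infinity, so the ultralimit at any fixed $x$ is $0$. Conjugating in addition by a translation that brings a near-witness to $0$ is the missing ingredient in both arguments.
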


Since Property (T) is inherited by quotients, a group satisfying the hypotheses of Proposition \ref{Prop_inf_ab_q} cannot have Property (T); see \cite[Theorem 1.3.4]{BedlHVa} and \cite[Corollary 1.3.6]{BedlHVa}. In other words, if $G$ is a group with Property (T), generated by a finite subset $S$, and $G$ acts on $\R$ by homeomorphisms in $\operatorname{BiLip}_+^{\mathrm{bd}}(\R)$, then
\begin{align*}
\max_{g\in S}\operatorname{BiLip}(g)
\end{align*}
cannot be arbitrarily close to $1$. The main goal of this paper is to obtain a quantitative version of this fact, which will be expressed in terms of the Kazhdan constant of $S$.

We will work in the more general framework of Relative Property (T). Let $G$ be a finitely generated group, and let $\Gamma$ be a subgroup of $G$. We say that the pair $(G,\Gamma)$ has Property (T) if, for every finite generating set $S\subset G$, there is $\varepsilon>0$ such that, for every unitary representation $\pi$ of $G$ on a Hilbert space $\mathcal{H}$ without nontrivial $\Gamma$-invariant vectors,
\begin{align*}
\max_{g\in S}\|\pi(g)\xi-\xi\|\geq \varepsilon\|\xi\|\quad\forall\xi\in\mathcal{H}.
\end{align*}
We refer the reader to \cite[\S 1]{Bur} and \cite{Cor} for details. In the situation described above, we say that $(S,\varepsilon)$ is a Kazhdan pair for $(G,\Gamma)$. For every finite subset $S\subset G$, we define
\begin{align}\label{Kazh_const}
\kappa(G,\Gamma,S)=\sup\left\{\varepsilon\geq 0\ \mid\ (S,\varepsilon)\text{ is a Kazhdan pair for } (G,\Gamma)\right\}.
\end{align}
We call $\kappa(G,\Gamma,S)$ the Kazhdan constant associated to $(G,\Gamma,S)$. Observe that $(G,\Gamma)$ has Property (T) if and only if $\kappa(G,\Gamma,S)>0$ for every finite generating set $S\subset G$. When $\Gamma=G$, we recover the original definition of Property (T), and we define
\begin{align*}
\kappa(G,S)=\kappa(G,G,S).
\end{align*}
By looking at the left regular representation on $\ell^2(G)$, which is given by
\begin{align*}
\lambda(g)\xi(h)=\xi(g^{-1}h)\quad\forall g,h\in G\ \ \forall \xi\in\ell^2(G),
\end{align*}
one sees that $\kappa(G,\Gamma,S)$ cannot be greater than $\sqrt{2}$. Indeed, let $\delta_e$ denote the delta function at the identity element of $G$. Then, for every $g\neq e$,
\begin{align*}
\|\lambda(g)\delta_e-\delta_e\|=\sqrt{2}.
\end{align*}

Let now $G$ be a subgroup of $\operatorname{Homeo}_+(\R)$. We say that $G$ has a global fixed point if there exists $x_0\in\R$ such that
\begin{align*}
g(x_0)=x_0\quad\forall g\in G.
\end{align*}

\begin{rmk}\label{Rmk_ubdd_orb}
In this context, having a global fixed point is equivalent to the existence of a bounded orbit, which can be seen from the identity
\begin{align*}
f\left(\sup_{g\in G}g(x)\right)=\sup_{g\in G}f(g(x))=\sup_{g\in G}g(x)\quad\forall f\in G.
\end{align*}
\end{rmk}

Now we can state our main result.

\begin{thm}\label{Thm_T_biLip}
Let $G$ be a finitely generated subgroup of $\operatorname{BiLip}_+^{\mathrm{bd}}(\R)$, and let $\Gamma$ be a subgroup of $G$ without global fixed points. If $(G,\Gamma)$ satisfies Property (T), then, for every finite generating set $S\subset G$,
\begin{align*}
\max_{g\in S}\operatorname{BiLip}(g)\geq \Phi\left(\kappa(G,\Gamma,S)\right),
\end{align*}
where $\Phi:[0,\sqrt{2})\to[1,\infty)$ is defined as
\begin{align}\label{def_Phi}
\Phi(t)=\max\left\{e^{2t},4(2-t^2)^{-2}\right\}\quad\forall t\in[0,\sqrt{2}).
\end{align}
\end{thm}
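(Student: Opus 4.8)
The plan is to exploit the Koopman representation of $\operatorname{BiLip}_+^{\mathrm{bd}}(\R)$ on $L^2(\R)$ advertised in the abstract. For $f\in\operatorname{BiLip}_+^{\mathrm{bd}}(\R)$, define the unitary operator $\pi(f)$ on $L^2(\R)$ by
\begin{align*}
\pi(f)\xi(x)=(Df^{-1}(x))^{1/2}\,\xi(f^{-1}(x)),
\end{align*}
which is unitary precisely because of the change-of-variables formula; here $Df^{-1}$ exists a.e.\ and is bounded above and below by $\operatorname{Lip}(f^{\pm 1})$. Restricting $\pi$ to $G$ gives a unitary representation of $G$. The first key step is to show that, because $G$ has no global fixed point (equivalently, no bounded orbit, by Remark \ref{Rmk_ubdd_orb}), this representation has no nonzero invariant vectors: an invariant $\xi$ would have $|\xi|^2$ an invariant finite measure on $\R$, and pushing it forward under the $G$-action — whose orbits escape to $\pm\infty$ on at least one side — forces $\xi=0$. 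I would use that bounded displacement keeps things from collapsing, and the absence of a global fixed point to rule out an invariant measure concentrated at a single point.

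With $\mathcal H^\pi=\{0\}$, Property (T) applies directly: for every unit vector $\xi\in L^2(\R)$ and every $\varepsilon<\kappa(S)$,
\begin{align*}
\max_{g\in S}\|\pi(g)\xi-\xi\|\geq\varepsilon.
\end{align*}
The heart of the argument is then to produce, for a suitable test vector $\xi$, a good upper bound for $\|\pi(g)\xi-\xi\|$ in terms of $L=\max_{g\in S}\operatorname{Lip}(g)$, and to compare. Two natural families of test functions suggest themselves, matching the two terms in the definition of $\Phi$. For the bound $L\geq e^{2\kappa(S)/2}$… more precisely $\operatorname{Lip}$ vs.\ $e^{2t}$: take $\xi$ to be (an $L^2$-normalised) indicator-type or Gaussian-type bump of very large width $R$; then $\pi(g)\xi-\xi$ is controlled by how much $g$ distorts lengths and by the displacement, and letting $R\to\infty$ the displacement term washes out, leaving a quantity governed by $\|\log Dg\|_\infty\le\log L$. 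Optimising gives an inequality of the form $\kappa(S)\le$ (something like) $\tfrac12\log L$ or $\sqrt{2}\,(1-L^{-1/2})$-type expressions, which rearrange to $L\ge e^{2\kappa(S)}$ and $L\ge 4(2-\kappa(S)^2)^{-2}$ respectively; the second bound should come from a sharper, more careful estimate of $\|\pi(g)\xi-\xi\|^2=2-2\operatorname{Re}\langle\pi(g)\xi,\xi\rangle$ using Cauchy–Schwarz on $\int (Df^{-1})^{1/2}\,\overline{\xi(f^{-1}x)}\,\xi(x)\,dx$, which brings in $(\operatorname{Lip} g)^{-1/2}$ and hence the $(2-t^2)^{-2}$ shape. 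Taking the maximum of the two resulting lower bounds for $L$ yields $L\ge\Phi(\kappa(S))$.

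I expect the main obstacle to be the second, sharper estimate: getting the constant $4(2-t^2)^{-2}$ exactly right requires choosing the test vector cleverly (likely a normalised indicator of an interval, or a function adapted to a single generator $g$ realising the max Lipschitz constant, e.g.\ supported where $Dg$ is extremal) and then carefully bounding $\operatorname{Re}\langle\pi(g)\xi,\xi\rangle$ from above by something like $(\operatorname{Lip} g)^{-1/2}$ up to displacement error, so that $\kappa(S)^2\le 2-2L^{-1/2}$. The limiting/displacement bookkeeping — ensuring the $\sup_x|g(x)-x|<\infty$ contributions vanish in the large-width limit while the multiplicative distortion survives — is the delicate technical point; the no-global-fixed-point hypothesis is essential precisely to guarantee $\pi$ has no invariant vectors so that the full strength $\varepsilon=\kappa(S)$ is available. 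A minor subtlety is that $\kappa(S)<\sqrt2$ always (shown in the excerpt via $\lambda$), so $\Phi(\kappa(S))$ is well-defined and finite.
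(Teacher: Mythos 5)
Your $L^2$ half is essentially the paper's Lemma~\ref{Lem_estimate1}: the Koopman representation on $L^2(\R)$, the observation that an invariant vector would produce a $G$-invariant finite measure (ruled out by Remark~\ref{Rmk_ubdd_orb}), and wide indicator test vectors $\xi_R=(2R)^{-1/2}\mathds{1}_{[-R,R]}$ giving $\|\pi(g)\xi_R-\xi_R\|^2=2-2\langle\pi(g)\xi_R,\xi_R\rangle$ with $\langle\pi(g)\xi_R,\xi_R\rangle\to$ (an average of $\sqrt{Dg^{-1}}$) $\geq L^{-1/2}$ as $R\to\infty$, hence $\kappa(S)^2\leq 2-2L^{-1/2}$, i.e.\ $L\geq 4(2-\kappa(S)^2)^{-2}$. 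This part is correct and matches the paper.

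However, there is a genuine gap in your plan for the other branch $L\geq e^{2\kappa(S)}$. You hope to extract a bound of the form $\kappa(S)\leq\tfrac12\log L$ from the same $L^2$ Koopman representation by ``letting $R\to\infty$ so the displacement washes out, leaving a quantity governed by $\|\log Dg\|_\infty$.'' This does not materialise in $L^2$. After the boundary contributions vanish, the quantity one controls is $2-2\cdot(\text{average of }\sqrt{Dg^{-1}})$, and the only pointwise information available is $\sqrt{Dg^{-1}}\geq L^{-1/2}$; the resulting upper bound $2(1-L^{-1/2})$ for $\kappa(S)^2$ is precisely the $4(2-t^2)^{-2}$ branch and is \emph{strictly weaker} than $\tfrac14(\log L)^2$ when $L$ is close to $1$ (to leading order in $L=1+\varepsilon$: $2(1-L^{-1/2})\approx\varepsilon$ but $\tfrac14(\log L)^2\approx\varepsilon^2/4$). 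No choice of Gaussian or indicator bump fixes this, because in $L^2$ the perturbation $|Dg^{-1}(x)^{1/2}-1|$ is not controlled by $\tfrac12|\log Dg^{-1}(x)|$ uniformly in $x$. The exponential bound requires passing to the Koopman representation on $L^p(\R)$ for $p\to\infty$, where $|Dg^{-1}(x)^{1/p}-1|\leq\log(L)/(p-\log L)$ (Lemma~\ref{Lem_log(L)}) makes the left-hand side decay like $1/p$, and this must be compared against a quantitative $L^p$-version of Property~(T), namely Proposition~\ref{Prop_p-Kazh} (obtained from \cite[Theorem~A]{BFGM} and the Mazur map, Theorem~\ref{Thm_Maz_map}), which supplies the matching $2\kappa(S)/p$ on the right. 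That $L^p$-machinery, absent from your proposal, is precisely what closes the gap between $4(2-t^2)^{-2}$ and $e^{2t}$.
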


\begin{rmk}
The function $\Phi$ in Theorem \ref{Thm_T_biLip} is strictly increasing and surjective. We include the graphs of the two functions involved in its definition; see Figure \ref{Fig_graphs}.
\end{rmk}

\begin{figure}[h]
\begin{tikzpicture}
\begin{axis}[
    axis lines = left,
    ymin=0,
    xtick={0,0.2,0.4,0.6,0.8,1,1.2},
    ytick={0,2,4,6,8,10,12,14,16},
    ymajorgrids=true,
    xmajorgrids=true,
    grid style=dashed,
    legend pos=north west,
]
\addplot [
    domain=0:1.23, 
    samples=100, 
    color=green,
]
{exp(2*x)};
\addlegendentry{\(e^{2t}\)}
\addplot [
    domain=0:1.23, 
    samples=100, 
    color=blue,
    ]
    {4/(2-x^2)^2};
\addlegendentry{\(4(2-t^2)^{-2}\)}

\end{axis}
\end{tikzpicture}
\caption{Graphs of the functions involved in the definition of $\Phi$.}
\label{Fig_graphs}
\end{figure}

Theorem \ref{Thm_T_biLip} can be reinterpreted in the following way. Let $(G,\Gamma)$ be a pair with Property (T), and let $S\subset G$ be a finite generating set. Then $G$ cannot act faithfully on $\R$ by homeomorphisms in $\operatorname{BiLip}_+^{\mathrm{bd}}(\R)$ in such a way that $\Gamma$ does not have global fixed points and 
\begin{align*}
\operatorname{BiLip}(g) < \Phi\left(\kappa(G,\Gamma,S)\right)
\end{align*}
for all $g\in S$.

The main ingredient in the proof of Theorem \ref{Thm_T_biLip} is the Koopman representation on $L^p(\R)$, associated to the action $G\curvearrowright\R$, together with a version of Property (T) for $L^p$ spaces, as studied in \cite{FisMar} and \cite{BFGM}. The two functions involved in the definition of $\Phi$ are obtained from the cases $p=2$ and $p\to\infty$.

Next, we apply Theorem \ref{Thm_T_biLip} to the pair $(\F_2\ltimes\Z^2,\Z^2)$. Recall that the free group on two generators $\F_2$ can be realised as a normal, finite index subgroup of $\operatorname{SL}_2(\Z)$, with generators
\begin{align*}
\left(\begin{array}{cc}
1 & 2\\ 0 & 1
\end{array}\right), \qquad
\left(\begin{array}{cc}
1 & 0\\ 2 & 1
\end{array}\right);
\end{align*}
see e.g. \cite[Exercise 6.20]{CecCoo}. This representation of $\F_2$ is often referred to as the Sanov subgroup of $\operatorname{SL}_2(\Z)$. We can thus consider the semidirect product $\F_2\ltimes\Z^2$ for the natural action of $\operatorname{SL}_2(\Z)$ on $\Z^2$. In other words, an element of $\F_2\ltimes\Z^2$ is a pair $(g,n)$, where $g$ belongs to the Sanov subgroup and $n\in\Z^2$. The product in $\F_2\ltimes\Z^2$ is given by
\begin{align*}
(g,n)(h,m)=(gh,gm+n)\quad\forall g,h\in\F_2\ \ \forall n,m\in\Z^2.
\end{align*}
The pair $(\operatorname{SL}_2(\Z)\ltimes\Z^2,\Z^2)$ has Property (T); see e.g. \cite[Theorem 2.1]{Sha}. The same holds for $(\F_2\ltimes\Z^2,\Z^2)$ because $\F_2\ltimes\Z^2$ is a finite-index subgroup of $\operatorname{SL}_2(\Z)\ltimes\Z^2$. Moreover, the group $\F_2\ltimes\Z^2$ acts faithfully on $\R$ by orientation-preserving homeomorphisms; see \cite[\S 2]{Nav2} and the references therein. Hence we can apply Theorem \ref{Thm_T_biLip} to the pair $(\F_2\ltimes\Z^2,\Z^2)$ in order to obtain restrictions for the kinds of actions that $\F_2\ltimes\Z^2$ can have on $\R$. Let us consider the following generating subset of $\F_2\ltimes\Z^2$:
\begin{align}\label{gen_set_intro}
S=\left\{R, T, e_1, e_2\right\},
\end{align}
where
\begin{align*}
R&=\left(\begin{array}{cc}
1 &  2\\ 0 & 1
\end{array}\right), &
T&=\left(\begin{array}{cc}
1 & 0\\ 2 & 1
\end{array}\right),
\end{align*}
\begin{align*}
e_1&=\left(\begin{array}{c}
1 \\ 0
\end{array}\right), &
e_2&=\left(\begin{array}{c}
0 \\ 1
\end{array}\right).
\end{align*}
Here $R, T$ are elements of $\F_2$, and $e_1, e_2$ are elements of $\Z^2$, which we view as elements of $\F_2\ltimes\Z^2$ for the natural inclusions $\F_2, \Z^2\subset \F_2\ltimes\Z^2$.

In \cite[\S 2]{Sha}, Shalom proved a quantitative version of Property (T) for the pair $(\operatorname{SL}_2(\Z)\ltimes\Z^2,\Z^2)$; see also \cite[\S 4]{Kas}. By adapting his argument to the pair $(\F_2\ltimes\Z^2,\Z^2)$, and combining it with Theorem \ref{Thm_T_biLip}, we prove the following.

\begin{thm}\label{Thm_F2xZ2}
Let $\sigma:\F_2\ltimes\Z^2\to\operatorname{BiLip}_+^{\mathrm{bd}}(\R)$ be an injective homomorphism such that $\sigma(\Z^2)$ does not have global fixed points. Let $S$ be the generating set \eqref{gen_set_intro}. Then
\begin{align*}
\max_{g\in S}\operatorname{BiLip}(\sigma(g))\geq \exp\left(\frac{\sqrt{26}-4}{5}\right)\qquad(\approx 1.24).
\end{align*}
\end{thm}

As mentioned above, every finitely generated, orderable group can be realised as a subgroup of $\operatorname{BiLip}_+^{\mathrm{bd}}(\R)$ without global fixed points. We say that a group $G$ is (left) orderable if there is a total order $\prec$ on $G$ such that
\begin{align*}
g\prec f\quad\implies\quad hg\prec hf
\end{align*}
for all $f,g,h\in G$. We refer the reader to \cite{DeNaRi} for a thorough account on orderable groups. The main motivation behind this paper is the following open problem; see \cite[Remark 3.5.21]{DeNaRi}, and \cite[Question 3]{Nav} for a related question.

\begin{ques}
Does there exist an orderable group satisfying Property (T)?
\end{ques}

Many examples of groups with Property (T) have been shown not to be orderable; see \cite{DerHur, Orl, Orl2, Wit, Wit2}. But this question remains open in general. We do not aim to answer it here, but as a consequence of Theorem \ref{Thm_T_biLip}, we can obtain restrictions for the values of the Kazhdan constants of such groups, if they exist.

\begin{cor}\label{Cor_ord_Kazh}
Let $G$ be a finitely generated, orderable group, and let $\Gamma$ be a subgroup of $G$ such that $(G,\Gamma)$ has Property (T). Then, for every finite, symmetric generating subset $S\subset G$,
\begin{align*}
\kappa(G,\Gamma,S)\leq\Phi^{-1}(|S|),
\end{align*}
where $\Phi$ is defined as in \eqref{def_Phi}.
\end{cor}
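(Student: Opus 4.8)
The plan is to deduce Corollary \ref{Cor_ord_Kazh} from Theorem \ref{Thm_T_biLip} by exhibiting a suitable action of $G$ on $\R$. First I would recall that, since $G$ is orderable, it admits a left-invariant total order, hence (as noted in the introduction, via \cite[\S 1.1.3]{DeNaRi}) a faithful action on $\R$ by orientation-preserving homeomorphisms; moreover, by \cite[Theorem 8.5]{DKNP}, this action can be taken inside $\operatorname{BiLip}_+^{\mathrm{bd}}(\R)$. Because $G$ has Property (T), it has no infinite abelian quotient, so by Proposition \ref{Prop_inf_ab_q} (or just the faithfulness of the action together with orderability, which precludes a global fixed point when $G$ is nontrivial) one sees that this subgroup of $\operatorname{BiLip}_+^{\mathrm{bd}}(\R)$ has no global fixed point. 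This puts us precisely in the hypotheses of Theorem \ref{Thm_T_biLip}.

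Next I would apply Theorem \ref{Thm_T_biLip} to this realisation of $G$ as a subgroup of $\operatorname{BiLip}_+^{\mathrm{bd}}(\R)$ and to the given finite symmetric generating set $S$, obtaining
\begin{align*}
\max_{g\in S}\operatorname{Lip}(g)\geq\Phi(\kappa(S)).
\end{align*}
The remaining point is to bound the left-hand side by $|S|$. Here I would invoke the quantitative content of \cite[Theorem 8.5]{DKNP}: the construction there (bi-Lipschitz action with bounded displacement) can be arranged so that the Lipschitz constant of each generator is controlled by the size of the generating set, specifically $\operatorname{Lip}(g)\leq|S|$ for every $g\in S$. Since $\Phi$ is strictly increasing and surjective from $[0,\sqrt 2)$ onto $[1,\infty)$, $\Phi^{-1}$ is well defined on $[1,\infty)$, and combining the two inequalities gives $\Phi(\kappa(S))\leq|S|$, i.e. $\kappa(S)\leq\Phi^{-1}(|S|)$, as claimed.

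The main obstacle is the second step: one must make sure the action furnished by \cite[Theorem 8.5]{DKNP} really does give $\operatorname{Lip}(g)\leq|S|$ for $g\in S$, rather than merely \emph{some} finite Lipschitz constant. I would therefore go back to that construction (a dynamical realisation built from the order, where the generators act as affine-like maps on the pieces of a partition indexed by $S$) and track the Lipschitz constants explicitly; the bound $|S|$ should come out because each generator displaces the canonical coordinate by an amount governed by the $|S|$ translates used to build the coordinate. A secondary, routine point is checking that $|S|\geq 1$ lies in the domain of $\Phi^{-1}$ and that the chain of inequalities is valid, which is immediate from the stated monotonicity and surjectivity of $\Phi$. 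If the sharp constant $|S|$ were not directly available from \cite{DKNP}, a fallback is to note any explicit bound $C(|S|)$ and state the corollary with $\Phi^{-1}(C(|S|))$ instead; but the clean statement suggests the bound $|S|$ is exactly what that construction yields.
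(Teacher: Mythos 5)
Your proposal follows the same route as the paper: realise $G$ as a subgroup of $\operatorname{BiLip}_+^{\mathrm{bd}}(\R)$ without global fixed points and with $\operatorname{Lip}(g)\leq|S|$ for all $g\in S$, then apply Theorem \ref{Thm_T_biLip}. The paper packages this first step as Theorem \ref{Thm_ord_BiLp}, citing \cite[Proposition 1.1.8]{DeNaRi} for the dynamical realisation without global fixed points, \cite[Theorem 8.5]{DKNP} for conjugating it into $\operatorname{BiLip}_+^{\mathrm{bd}}(\R)$, and (the proof of) \cite[Proposition 8.4]{DKNP} applied to the uniform probability measure on $S$ for the explicit bound $\operatorname{Lip}(g)\leq|S|$. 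So the ``obstacle'' you flag is exactly where the quantitative content lives, and your guess about where the bound $|S|$ comes from is right.

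One genuine flaw: your justification for the absence of a global fixed point does not hold as stated. Neither of the two reasons you offer works. Proposition \ref{Prop_inf_ab_q} concerns sequences of actions whose Lipschitz constants tend to $1$ and says nothing about fixed points of a single action; there is no direct way to extract fixed-point-freeness from it. And ``faithfulness plus orderability precludes a global fixed point'' is simply false: the increasing homeomorphism that is the identity on $(-\infty,0]$ and equals $x\mapsto x+\tfrac12(1-e^{-x})$ on $[0,\infty)$ is bi-Lipschitz with bounded displacement, generates a faithful $\Z$-action, and fixes $0$. What is actually used in the paper is a property of the \emph{specific} embedding: the dynamical realisation of a left-order on $G$ (as in \cite[Proposition 1.1.8]{DeNaRi}) has no global fixed point by construction, and this is preserved by the conjugation from \cite[Theorem 8.5]{DKNP}. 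The conclusion you need is therefore correct, but only because one chooses this particular realisation, not because of a general principle about faithful actions of orderable groups.
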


\begin{rmk}
A consequence of Corollary \ref{Cor_ord_Kazh} is the following: if there exists an orderable group $G$ with Property (T), then
\begin{align*}
\kappa(G,S)\leq\Phi^{-1}(|S|),
\end{align*}
for every finite, symmetric generating subset $S\subset G$.
\end{rmk}

Observe that $\Phi^{-1}:[1,\infty)\to[0,\sqrt{2})$ is given by
\begin{align*}
\Phi^{-1}(t)=\min\left\{\tfrac{1}{2}\log(t),\sqrt{2}\left(1-t^{-1/2}\right)^{1/2}\right\}\quad \forall t\in[1,\infty).
\end{align*}
For clarity, we include below the graphs of the two functions involved in its definition.

\begin{figure}[h]
\begin{tikzpicture}
\begin{axis}[
    axis lines = left,
    xmin=0,
    xtick={0,2,4,6,8,10,12,14,16,18,20,22},
    ytick={0,0.25,0.5,0.75,1,1.25,1.5},
    xmajorgrids=true,
    ymajorgrids=true,
    grid style=dashed,
    legend pos=south east,
]
\addplot [
    domain=1:23, 
    samples=100, 
    color=green,
]
{ln(x)/2};
\addlegendentry{\(\frac{1}{2}\log(t)\)}
\addplot [
    domain=1:23, 
    samples=100, 
    color=blue,
    ]
    {sqrt(2-2/sqrt(x))};
\addlegendentry{\(\sqrt{2}(1-t^{-1/2})^{1/2}\)}

\end{axis}
\end{tikzpicture}
\caption{Graphs of the functions involved in the definition of $\Phi^{-1}$.}
\end{figure}

\subsection*{Organisation of the paper}
We begin by discussing ultralimits and the proof of Proposition \ref{Prop_inf_ab_q} in Section \ref{S_limits}. In Section \ref{S_rep_Lp}, we prove some preliminary results regarding the Mazur map and the Koopman representation on $L^p(\R)$. With all this, we prove Theorem \ref{Thm_T_biLip} in Section \ref{S_main_res}. In Section \ref{S_sdp}, we focus on the semidirect product $\F_2\ltimes\Z^2$, and we prove Theorem \ref{Thm_F2xZ2}. Finally, in Section \ref{S_ord}, we prove Corollary \ref{Cor_ord_Kazh}.

\subsection*{Acknowledgements} I am grateful to Andr\'es Navas for many interesting discussions, and for his very valuable comments and suggestions.

\section{Limits of actions with arbitrarily small Lipschitz constants}\label{S_limits}

In this section, we prove Proposition \ref{Prop_inf_ab_q}. The main idea of the proof is to construct an action by translations as a limit of actions with Lipschitz constants tending to $1$. This will be achieved through the use of ultralimits. For this purpose, we need to consider actions with a uniform control on the displacement.

\begin{lem}\label{Lem_conj_homoth}
Let $g\in\operatorname{BiLip}_+^{\mathrm{bd}}(\R)$. For $\alpha\in\R\setminus\{0\}$, let $\varphi_\alpha:\R\to\R$ denote the multiplication by $\alpha$. Then
\begin{align*}
\operatorname{BiLip}(\varphi_\alpha^{-1}\circ g\circ\varphi_\alpha)=\operatorname{BiLip}(g),
\end{align*}
and
\begin{align*}
\operatorname{disp}\left(\varphi_\alpha^{-1}\circ g\circ\varphi_\alpha\right) = \alpha^{-1}\operatorname{disp}(g).
\end{align*}
\end{lem}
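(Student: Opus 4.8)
The plan is to write out the conjugated map explicitly and reduce both assertions to a change of variables. Set $h=\varphi_\alpha^{-1}\circ g\circ\varphi_\alpha$, so that $h(x)=\alpha^{-1}g(\alpha x)$ for every $x\in\R$; note that $h\in\operatorname{Homeo}_+(\R)$ and that its inverse is $h^{-1}=\varphi_\alpha^{-1}\circ g^{-1}\circ\varphi_\alpha$, that is, $h^{-1}(x)=\alpha^{-1}g^{-1}(\alpha x)$.

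For the Lipschitz constant I would argue directly from the definition. For $x,y\in\R$ we have $h(x)-h(y)=\alpha^{-1}\bigl(g(\alpha x)-g(\alpha y)\bigr)$, hence $|h(x)-h(y)|=|\alpha|^{-1}|g(\alpha x)-g(\alpha y)|$. If $\operatorname{Lip}(g)\le L$, then applying the bi-Lipschitz inequality for $g$ to the pair $(\alpha x,\alpha y)$ and dividing through by $|\alpha|$ gives $\tfrac1L|x-y|\le|h(x)-h(y)|\le L|x-y|$, so $\operatorname{Lip}(h)\le\operatorname{Lip}(g)$. The reverse inequality follows by symmetry: since $g=(\varphi_{\alpha^{-1}})^{-1}\circ h\circ\varphi_{\alpha^{-1}}$, the same estimate applied to $h$ with $\alpha^{-1}$ in place of $\alpha$ yields $\operatorname{Lip}(g)\le\operatorname{Lip}(h)$, whence equality. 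Alternatively, one can invoke Remark~\ref{Rmk_Lip_D}: by the chain rule $Dh(x)=Dg(\alpha x)$ and $Dh^{-1}(x)=Dg^{-1}(\alpha x)$ almost everywhere, and since $x\mapsto\alpha x$ is a linear bijection of $\R$ it preserves the class of Lebesgue-null sets, so $\|Dh\|_\infty=\|Dg\|_\infty$ and $\|Dh^{-1}\|_\infty=\|Dg^{-1}\|_\infty$; taking the maximum of the two gives $\operatorname{Lip}(h)=\operatorname{Lip}(g)$.

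For the displacement I would simply compute $h(x)-x=\alpha^{-1}g(\alpha x)-x=\alpha^{-1}\bigl(g(\alpha x)-\alpha x\bigr)$, so that $|h(x)-x|=|\alpha|^{-1}|g(\alpha x)-\alpha x|$. Taking the supremum over $x\in\R$ and performing the substitution $y=\alpha x$, which is a bijection of $\R$, yields $\sup_{x\in\R}|h(x)-x|=|\alpha|^{-1}\sup_{y\in\R}|g(y)-y|$. In particular this quantity is finite, so $h$ indeed belongs to $\operatorname{BiLip}_+^{\mathrm{bd}}(\R)$, consistently with the fact that it is a conjugate of $g$ inside $\operatorname{Homeo}_+(\R)$.

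There is no genuine obstacle here; the only points requiring a little care are the bookkeeping with $\alpha$ — in particular, if one allows $\alpha<0$ the prefactor must be read as $|\alpha|^{-1}$, since $\varphi_\alpha$ is then orientation-reversing while the conjugate $h$ is still increasing, so one may as well take $\alpha>0$ from the outset — and, in the derivative-based variant, the remark that a linear change of variables preserves the essential supremum norm because it preserves the class of null sets.
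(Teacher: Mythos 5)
Your proof is correct. The paper establishes the Lipschitz identity purely via the derivative characterization of Remark~\ref{Rmk_Lip_D}: it computes $D(\varphi_\alpha^{-1}\circ g\circ\varphi_\alpha)(x)=Dg(\alpha x)$ and $D(\varphi_\alpha^{-1}\circ g^{-1}\circ\varphi_\alpha)(x)=Dg^{-1}(\alpha x)$ almost everywhere and then invokes invariance of $\|\cdot\|_\infty$ under the linear change of variables. Your primary argument is instead a direct one from the definition of the bi-Lipschitz constant, applying the two-sided inequality to the pair $(\alpha x,\alpha y)$ and then getting the reverse inequality by the symmetry $g=\varphi_{\alpha^{-1}}^{-1}\circ h\circ\varphi_{\alpha^{-1}}$; this is slightly more elementary since it does not lean on Rademacher differentiability. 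You then offer the derivative-based route as an alternative, which coincides with the paper's proof. The displacement computation is the same in both. You also correctly flag a small imprecision in the statement: for $\alpha<0$ the scaling factor should be $|\alpha|^{-1}$ rather than $\alpha^{-1}$, since $\varphi_\alpha$ is then orientation-reversing; the paper implicitly assumes $\alpha>0$ (which is all that is used later), and making that assumption explicit, as you suggest, is the cleanest fix.
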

\begin{proof}
For almost every $x\in\R$,
\begin{align*}
D(\varphi_\alpha^{-1}\circ g\circ\varphi_\alpha)(x) &=Dg(\alpha x),\\
D(\varphi_\alpha^{-1}\circ g^{-1}\circ\varphi_\alpha)(x) &=Dg^{-1}(\alpha x).
\end{align*}
This shows that $\operatorname{BiLip}(\varphi_\alpha^{-1}\circ g\circ\varphi_\alpha)=\operatorname{BiLip}(g)$; see Remark \ref{Rmk_Lip_D}. On the other hand,
\begin{align*}
\operatorname{disp}\left(\varphi_\alpha^{-1}\circ g\circ\varphi_\alpha\right)&=
\sup_{x\in\R}|\varphi_\alpha^{-1}\circ g\circ\varphi_\alpha(x)-x|\\
&= \sup_{x\in\R}|\alpha^{-1}g(\alpha x)-\alpha^{-1}\alpha x|\\
&= \alpha^{-1}\sup_{x\in\R}|g(x)-x|\\
&=\alpha^{-1}\operatorname{disp}(g).\qedhere
\end{align*}
\end{proof}

This result says that any $G<\operatorname{BiLip}_+^{\mathrm{bd}}(\R)$ can be conjugated into another subgroup of $\operatorname{BiLip}_+^{\mathrm{bd}}(\R)$ with the same Lipschitz constants, but for which the displacements are rescaled by a fixed amount.

Now we can prove Proposition \ref{Prop_inf_ab_q}. For details on ultrafilters and ultralimits, we refer the reader to \cite{Gol}; see also \cite[Appendix A.4]{Pis}.

\begin{proof}[Proof of Proposition \ref{Prop_inf_ab_q}]
We have a sequence of injective homomorphisms $\theta_n:G\to\operatorname{BiLip}_+^{\mathrm{bd}}(\R)$ such that
\begin{align*}
\lim_{n\to\infty}\max_{g\in S}\operatorname{BiLip}(\theta_n(g))=1,
\end{align*}
where $G=\langle S\rangle$. Conjugating the action $\theta_n$ by a homothecy, by Lemma \ref{Lem_conj_homoth}, we may assume that
\begin{align*}
\max_{g\in S}\operatorname{disp}(\theta_n(g)) = 1
\end{align*}
for all $n\in\N$. In particular, since $S$ generates $G$, for all $x\in\R$ and $g\in G$, the sequence $(\theta_n(g)(x))_{n\in\N}$ is bounded. Moreover, replacing $(\theta_n)$ by a subsequence, we can find $g_0\in S$ such that
\begin{align}\label{theta_n_g0}
\operatorname{disp}(\theta_n(g_0)) = \max_{g\in S}\operatorname{disp}(\theta_n(g)) = 1
\end{align}
for all $n\in\N$. Fix a non-principal ultrafilter $\mathfrak{U}$ on $\N$, and define
\begin{align}\label{limit_act}
g\cdot x=\lim_{\mathfrak{U}}\theta_n(g)(x)\quad\forall g\in G\ \ \forall x\in\R.
\end{align}
We claim that this defines an action by translations on $\R$. Indeed, let $e$ denote the identity element of $G$. Then
\begin{align*}
e\cdot x = \lim_{\mathfrak{U}}\theta_n(e)(x)= x
\end{align*}
for all $x\in\R$. Moreover, for every $f,g\in G$,
\begin{align*}
(fg)\cdot x - f\cdot g\cdot x &= \lim_{\mathfrak{U}}\theta_n(fg)(x)-\theta_n(f)(g\cdot x)\\
&= \lim_{\mathfrak{U}}\theta_n(f)(\theta_n(g)(x)-g\cdot x).
\end{align*}
By hypothesis, there is a constant $C_f\geq 1$ such that $\operatorname{BiLip}(\theta_n(f))\leq C_f$ for all $n\in\N$. Thus
\begin{align*}
|(fg)\cdot x - f\cdot g\cdot x| \leq C_f\lim_{\mathfrak{U}}|\theta_n(g)(x)-g\cdot x| = 0.
\end{align*}
This shows that \eqref{limit_act} defines an action of $G$ on $\R$. Moreover, for all $g\in G$ and $x,y\in\R$,
\begin{align*}
|g\cdot x - g\cdot y| \leq \lim_{\mathfrak{U}}\operatorname{BiLip}(\theta_n(g))|x-y|= |x-y|.
\end{align*}
Applying the same reasoning to $g^{-1}$, we obtain
\begin{align*}
|g\cdot x - g\cdot y| = |x-y|.
\end{align*}
Therefore \eqref{limit_act} defines an action by translations. Finally, the action is not trivial because
\begin{align*}
|g_0\cdot x-x|=\lim_{\mathfrak{U}}|\theta_n(g_0)(x)-x|=1,
\end{align*}
where $g_0$ is as in \eqref{theta_n_g0}. The map $g\mapsto g\cdot 0$ is a group homomorphism from $G$ to $\R$ with infinite image.
\end{proof}

\section{Property (T) and representations on $L^p$}\label{S_rep_Lp}

\subsection{Orthogonal representations}
Let $G$ be a group, and let $E$ be a real Banach space. The orthogonal group $\mathbf{O}(E)$ is the group of linear, surjective isometries of $E$. A group homomorphism $\pi:G\to\mathbf{O}(E)$ is called an orthogonal (or isometric) representation of $G$ on $E$. Let $\Gamma$ be a subgroup of $G$. We say that $\xi\in E$ is a $\Gamma$-invariant vector for the representation $\pi$ if
\begin{align*}
\pi(g)\xi=\xi\quad\forall g\in \Gamma.
\end{align*}

If $\mathcal{H}$ is a Hilbert space over $\C$, the group of linear, surjective isometries of $\mathcal{H}$ is called the unitary group of $\mathcal{H}$, and it is denoted by $\mathbf{U}(\mathcal{H})$. A group homomorphism $\pi:G\to\mathbf{U}(\mathcal{H})$ is called a unitary representation of $G$. As mentioned in the introduction, Property (T) is defined in terms of unitary representations; however, the proof of Theorem \ref{Thm_T_biLip} deals mainly with representations on real spaces. The following fact is a consequence of the complexification of orthogonal representations; see \cite[Remark A.7.2]{BedlHVa} and \cite[Remark 2.12.1]{BedlHVa}. We record it here for completeness.

\begin{lem}\label{Lem_Kazh_real}
Let $(G,\Gamma)$ be a pair with Property (T), and let $\pi:G\to\mathbf{O}(\mathcal{H})$ be an orthogonal representation on a real Hilbert space $\mathcal{H}$. Assume that $\pi$ does not have nontrivial $\Gamma$-invariant vectors. Then, for every finite generating set $S\subset G$,
\begin{align*}
\max_{g\in S}\|\pi(g)\xi-\xi\|\geq\kappa(G,\Gamma,S)\|\xi\|\quad\forall\xi\in\mathcal{H},
\end{align*}
where $\kappa(G,\Gamma,S)$ is the Kazhdan constant associated to $(G,\Gamma,S)$, as defined in \eqref{Kazh_const}.
\end{lem}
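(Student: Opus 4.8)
The statement is Lemma \ref{Lem_Kazh_real}, asserting that the Kazhdan inequality with the \emph{complex} Kazhdan constant $\kappa(S)$ survives verbatim for real orthogonal representations without nonzero invariant vectors. The natural route is complexification. Given an orthogonal representation $\pi:G\to\mathbf{O}(\mathcal{H})$ on a real Hilbert space $\mathcal{H}$, I would form $\mathcal{H}_{\C}=\mathcal{H}\otimes_{\R}\C$, equipped with the Hermitian inner product $\langle \xi_1+\ii\xi_2,\eta_1+\ii\eta_2\rangle = \langle\xi_1,\eta_1\rangle+\langle\xi_2,\eta_2\rangle + \ii(\langle\xi_2,\eta_1\rangle-\langle\xi_1,\eta_2\rangle)$, and let $\pi_{\C}(g)=\pi(g)\otimes\mathrm{id}$. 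Then $\pi_{\C}$ is a unitary representation of $G$ on $\mathcal{H}_{\C}$, and one checks that $\|(\xi_1+\ii\xi_2)\|_{\mathcal{H}_{\C}}^2=\|\xi_1\|^2+\|\xi_2\|^2$ and $\|\pi_{\C}(g)(\xi_1+\ii\xi_2)-(\xi_1+\ii\xi_2)\|^2 = \|\pi(g)\xi_1-\xi_1\|^2+\|\pi(g)\xi_2-\xi_2\|^2$.

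The key step is to verify that $\pi_{\C}$ has no nonzero invariant vectors. A vector $\xi_1+\ii\xi_2$ is fixed by $\pi_{\C}$ iff $\pi(g)\xi_j=\xi_j$ for all $g\in G$ and $j=1,2$; since $\pi$ has no nonzero invariant vectors, this forces $\xi_1=\xi_2=0$. Hence $(\mathcal{H}_{\C}^{\pi_{\C}})^\perp=\mathcal{H}_{\C}$, and the defining inequality \eqref{Kazh_pair} of Property (T), valid for the Kazhdan pair realising $\kappa(S)$ (or rather: valid for every $\varepsilon<\kappa(S)$, then taking suprema), gives
\begin{align*}
\max_{g\in S}\|\pi_{\C}(g)\zeta-\zeta\|\geq\kappa(S)\|\zeta\|,\quad\forall\zeta\in\mathcal{H}_{\C}.
\end{align*}
Applying this to $\zeta=\xi\in\mathcal{H}\subset\mathcal{H}_{\C}$ (i.e. $\xi_2=0$) and using the norm identities above recovers $\max_{g\in S}\|\pi(g)\xi-\xi\|\geq\kappa(S)\|\xi\|$ for all $\xi\in\mathcal{H}$, which is the claim. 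One small point to address is that $\kappa(S)$, as defined via a supremum in \eqref{Kazh_const}, is an achievable rate in the sense that \eqref{Kazh_pair} holds with any $\varepsilon<\kappa(S)$; passing to the limit $\varepsilon\uparrow\kappa(S)$ yields the inequality with $\kappa(S)$ itself, since the left-hand side is independent of $\varepsilon$.

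I do not expect a serious obstacle here; the only mild subtleties are bookkeeping the sesquilinear convention so that the norm and displacement identities come out cleanly (an orthonormal-type decomposition $\mathcal{H}_{\C}=\mathcal{H}\oplus\ii\mathcal{H}$ as \emph{real} Hilbert spaces makes this transparent), and noting that the generating set $S$ can be taken the same on both sides because $\kappa(S)$ is defined intrinsically in terms of $S$. Since all of this is standard — it is exactly the content of \cite[Remark A.7.2]{BedlHVa} and \cite[Remark 2.12.1]{BedlHVa} — I would keep the proof short, essentially recording the complexification and the two identities, and citing those remarks for the general principle.
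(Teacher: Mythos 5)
Your proof is correct and matches the paper's approach: the paper states this lemma without a written proof, remarking only that it follows from complexification of orthogonal representations and citing the relevant remarks in Bekka--de la Harpe--Valette. Your explicit argument (complexifying, checking that $\pi_{\C}$ retains the no-invariant-vectors property because an invariant $\xi_1+\ii\xi_2$ forces $\xi_1,\xi_2$ to be $\pi$-invariant, restricting to real vectors, and passing $\varepsilon\uparrow\kappa(S)$) is exactly that route written out in full.
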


\subsection{The Mazur map and representations on $L^p$}
In order to obtain the exponential bound in Theorem \ref{Thm_T_biLip}, we need to consider representations on $L^p(\R)$ for $p\geq 2$. The Mazur map allows one to relate such representations for different values of $p$. Let $(X,\mu)$ be a measure space, and let $1\leq p,q<\infty$. The Mazur map $M_{q,p}:L^q(X,\mu)\to L^p(X,\mu)$ is defined by
\begin{align}\label{def_Mazur}
M_{q,p}(\xi)=\operatorname{sign}(\xi)|\xi|^\frac{q}{p}\quad\forall\xi\in L^q(X,\mu).
\end{align}
The proof the following result can be found in \cite[\S 9.1]{BenLin}; see also \cite[\S 3.7.1]{Now}.

\begin{thm}[Mazur]\label{Thm_Maz_map}
Let $(X,\mu)$ be a measure space, and let $1\leq q\leq p<\infty$. Then the Mazur map \eqref{def_Mazur} is a homeomorphism between the unit spheres of $L^q(X,\mu)$ and $L^p(X,\mu)$. More precisely, there is a constant $C>0$ (depending only on $\frac{q}{p}$) such that
\begin{align*}
\frac{q}{p}\|\xi-\eta\|_q\leq \|M_{q,p}(\xi)-M_{q,p}(\eta)\|_p\leq C\|\xi-\eta\|_q^{\frac{q}{p}},
\end{align*}
for all $\xi,\eta$ in the unit sphere of $L^q(X,\mu)$.
\end{thm}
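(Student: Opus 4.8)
The plan is to reduce the two displayed estimates to elementary one-variable inequalities and then to get the homeomorphism statement from them for free. We may assume $q<p$, the case $q=p$ being trivial. Write $r=q/p\in(0,1)$ and let $\psi_r\colon\R\to\R$ be $\psi_r(t)=\operatorname{sign}(t)|t|^{r}$, so that $M_{q,p}(\xi)=\psi_r\circ\xi$. Two observations are immediate: first, $M_{q,p}$ sends the unit sphere of $L^q(X,\mu)$ into that of $L^p(X,\mu)$, since $\|M_{q,p}(\xi)\|_p^p=\int_X|\xi|^{rp}\,d\mu=\int_X|\xi|^{q}\,d\mu=\|\xi\|_q^{q}$; second, $\psi_r$ and $\psi_{1/r}$ are mutually inverse bijections of $\R$, so $M_{q,p}$ and $M_{p,q}$ are mutually inverse, and $M_{q,p}$ restricts to a bijection between the two unit spheres. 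Granting the two estimates, the homeomorphism claim is then automatic: the right-hand estimate makes $M_{q,p}$ uniformly continuous on the sphere (indeed H\"older continuous), and applying the left-hand estimate to $M_{p,q}=M_{q,p}^{-1}$ does the same for the inverse. So it only remains to prove the two estimates.

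For the right-hand (H\"older-type) inequality I would use that $x\mapsto x^{r}$ is concave and subadditive on $[0,\infty)$; a short case analysis on the signs of $a$ and $b$ then gives the pointwise bound $|\psi_r(a)-\psi_r(b)|\leq 2^{1-r}|a-b|^{r}$ for all $a,b\in\R$. Raising this to the $p$-th power and integrating,
\[
\|M_{q,p}(\xi)-M_{q,p}(\eta)\|_p^p\leq 2^{(1-r)p}\int_X|\xi-\eta|^{rp}\,d\mu=2^{(1-r)p}\,\|\xi-\eta\|_q^{q},
\]
which is the claimed inequality with $C=2^{\,1-q/p}$, a constant depending only on $q/p$ (and here the unit-sphere hypothesis is not even needed).

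The left-hand inequality is the substantive one, and I would prove the equivalent statement that $M_{p,q}$ is $\tfrac pq$-Lipschitz from the unit sphere of $L^p$ into $L^q$; applied to $f=M_{q,p}(\xi)$ and $g=M_{q,p}(\eta)$ this is exactly $\tfrac qp\|\xi-\eta\|_q\leq\|M_{q,p}(\xi)-M_{q,p}(\eta)\|_p$. Put $s=p/q>1$; then $\psi_s\in C^1(\R)$ with $\psi_s'(t)=s|t|^{s-1}$, so the fundamental theorem of calculus gives
\[
|\psi_s(a)-\psi_s(b)|=s\,|a-b|\int_0^1|b+t(a-b)|^{s-1}\,dt,\qquad\forall a,b\in\R.
\]
Since $0<s-1\leq p$, the power-mean inequality on $([0,1],dt)$ bounds this integral by $\big(\int_0^1|b+t(a-b)|^{p}\,dt\big)^{(s-1)/p}$. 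Substituting this into $\|M_{p,q}(f)-M_{p,q}(g)\|_q^q=\int_X|\psi_s(f)-\psi_s(g)|^q\,d\mu$, noting $(s-1)q/p=(p-q)/p$, applying H\"older's inequality in the $x$-variable with the conjugate exponents $\tfrac pq$ and $\tfrac{p}{p-q}$, and then Fubini, one reaches
\[
\|M_{p,q}(f)-M_{p,q}(g)\|_q^{q}\leq s^{q}\,\|f-g\|_p^{q}\left(\int_0^1\|(1-t)g+tf\|_p^p\,dt\right)^{(p-q)/p}.
\]
By the triangle inequality together with $\|f\|_p=\|g\|_p=1$, the bracketed integral is at most $\int_0^1\big((1-t)+t\big)^p\,dt=1$ --- this is the single place where the sphere normalization is used --- so $\|M_{p,q}(f)-M_{p,q}(g)\|_q\leq s\,\|f-g\|_p=\tfrac pq\|f-g\|_p$, as desired.

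The main obstacle is this left-hand inequality. It has a different homogeneity from the right-hand one --- it is linear in $\|\xi-\eta\|_q$, not of order $\|\xi-\eta\|_q^{q/p}$ --- so it can hold only under a normalization such as the unit-sphere constraint, which must therefore be used in an essential way; and the power-mean step is exactly what turns a constant merely comparable to $p/q$ into $p/q$ itself. The full details of this classical argument can be found in \cite[\S 9.1]{BenLin}.
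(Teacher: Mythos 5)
The paper does not prove this theorem; it is stated as a citation to Benyamini--Lindenstrauss \cite[\S 9.1]{BenLin} (and Nowak), so there is no in-paper argument to compare against. Your proof is correct and self-contained, and it is essentially the standard argument for the sharp constants. I verified the pieces: the pointwise bound $|\psi_r(a)-\psi_r(b)|\le 2^{1-r}|a-b|^r$ does follow from concavity (for opposite signs) together with subadditivity (for same signs) of $t\mapsto t^r$, yielding $C=2^{1-q/p}$, and indeed the sphere hypothesis is unused there. For the lower bound, the fundamental-theorem representation of $\psi_s$ is valid because $\psi_s'(t)=s|t|^{s-1}$ extends continuously to $t=0$ when $s>1$; the power-mean step uses only $s-1\le p$, which holds since $s-1=(p-q)/q\le p$ for $q\ge 1$; the H\"older step with exponents $p/q$ and $p/(p-q)$ is clean; and Fubini together with the convexity of the sphere normalization gives $\int_0^1\|(1-t)g+tf\|_p^p\,dt\le 1$. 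This chain delivers exactly $\|M_{p,q}(f)-M_{p,q}(g)\|_q\le\frac{p}{q}\|f-g\|_p$, hence the stated $\frac{q}{p}\|\xi-\eta\|_q\le\|M_{q,p}(\xi)-M_{q,p}(\eta)\|_p$, and both directions together give the homeomorphism of spheres. One cosmetic remark: when treating the case $q=p$ as ``trivial,'' you might note explicitly that $M_{p,p}$ is the identity so both inequalities hold with equality and $C=1$; otherwise the argument reads cleanly.
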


Although the Mazur map is nonlinear, it conjugates orthogonal representations into orthogonal representations. This is a consequence of the Banach--Lamperti
theorem, which describes the isometries of $L^p$; see e.g. \cite[Theorem 2.16]{BFGM}. Moreover, it was shown in \cite[Theorem A]{BFGM} that a group with Property (T) satisfies an analogous property for representations on $L^p$ spaces. We will need a quantitative version of this fact.

\begin{prop}\label{Prop_p-Kazh}
Let $(X,\mu)$ be a $\sigma$-finite measure space and $p\in[2,\infty)$. Let $(G,\Gamma)$ be a pair with Property (T), and let $\pi:G\to\mathbf{O}(L^p(X,\mu))$ be an orthogonal representation without nontrivial $\Gamma$-invariant vectors. Then, for every finite generating subset $S\subset G$,
\begin{align*}
\max_{g\in S}\|\pi(g)\xi-\xi\|_p \geq\frac{2}{p}\kappa(G,\Gamma,S)\|\xi\|_p\quad\forall\xi\in L^p(X,\mu),
\end{align*}
where $\kappa(G,\Gamma,S)$ is the Kazhdan constant, as defined in \eqref{Kazh_const}.
\end{prop}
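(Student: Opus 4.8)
The plan is to reduce the $L^p$ statement to the Hilbert space case ($p=2$) by transporting everything through the Mazur map $M_{p,2}:L^p(X,\mu)\to L^2(X,\mu)$. First I would recall, via the Banach--Lamperti theorem, that every linear isometry $T$ of $L^p(X,\mu)$ is of the form $T\xi=h\cdot(\xi\circ\tau)$ for a suitable regular set isomorphism $\tau$ and a weight $h$, and that the formula obtained by conjugating $T$ with the Mazur map, namely $M_{2,p}^{-1}\circ T\circ M_{2,p}$ (equivalently $M_{p,2}\circ T\circ M_{p,2}^{-1}$), is again a linear isometry, this time of $L^2(X,\mu)$. Applying this to $\pi(g)$ for each $g\in G$ yields an orthogonal representation $\tilde\pi:G\to\mathbf{O}(L^2(X,\mu))$, with $\tilde\pi(g)=M_{p,2}\circ\pi(g)\circ M_{p,2}^{-1}$ on the unit sphere, and extended $2$-homogeneously. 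One checks that $\tilde\pi$ has no nontrivial invariant vectors: a $\tilde\pi$-invariant unit vector $\eta$ would give the $\pi$-invariant unit vector $M_{2,p}(\eta)$, contradicting the hypothesis on $\pi$.

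Next I would fix $\xi\in L^p(X,\mu)$ with $\|\xi\|_p=1$ (the general case follows by homogeneity, since both sides of the desired inequality scale linearly in $\|\xi\|_p$) and set $\eta=M_{p,2}(\xi)\in L^2(X,\mu)$, a unit vector. By Lemma \ref{Lem_Kazh_real} applied to $\tilde\pi$,
\begin{align*}
\max_{g\in S}\|\tilde\pi(g)\eta-\eta\|_2\geq\kappa(S).
\end{align*}
Now I invoke Theorem \ref{Thm_Maz_map} with $q=2$ and the roles arranged so that the \emph{lower} bound there controls $\|\,\cdot\,\|_2$ distances on the $L^2$-sphere in terms of $\|\,\cdot\,\|_p$ distances on the $L^p$-sphere: for unit vectors $\xi_1,\xi_2\in L^p$,
\begin{align*}
\tfrac{2}{p}\|\xi_1-\xi_2\|_p\leq\|M_{p,2}(\xi_1)-M_{p,2}(\xi_2)\|_2.
\end{align*}
Taking $\xi_1=\pi(g)\xi$ and $\xi_2=\xi$ (both on the unit sphere of $L^p$ since $\pi(g)$ is an isometry), and using $M_{p,2}(\pi(g)\xi)=\tilde\pi(g)M_{p,2}(\xi)=\tilde\pi(g)\eta$ on the sphere, I get $\tfrac{2}{p}\|\pi(g)\xi-\xi\|_p\leq\|\tilde\pi(g)\eta-\eta\|_2$ for each $g$. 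Combining with the previous display gives $\max_{g\in S}\tfrac{2}{p}\|\pi(g)\xi-\xi\|_p\geq\kappa(S)$, which is exactly the claimed bound for unit $\xi$.

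The main obstacle, and the step deserving the most care, is the verification that $\tilde\pi$ really is a well-defined orthogonal representation on $L^2$ — that is, that Mazur-conjugation of a linear $L^p$-isometry is linear on $L^2$, and that it respects composition. The linearity is not automatic from the Mazur map alone (which is badly nonlinear); it is precisely the Banach--Lamperti structure — isometries act by weighted composition operators, which commute with pointwise operations like $\operatorname{sign}(\cdot)|\cdot|^{t}$ up to the scalar weight — that forces linearity after conjugation. I would cite \cite[Theorem 2.16]{BFGM} and \cite[Theorem A]{BFGM} for this, state carefully that $M_{p,2}$ intertwines $\pi(g)$ with $\tilde\pi(g)$ on the unit sphere, and note that $\tilde\pi$ is extended to all of $L^2$ by positive homogeneity (which is consistent because $M_{p,2}(t\xi)=\operatorname{sign}(t)|t|^{p/2}M_{p,2}(\xi)$, matched against the fact that a linear isometry of $L^2$ is determined by its restriction to the sphere together with homogeneity). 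The only other point to watch is the direction of the Mazur inequality: we need the \emph{lower} bound from Theorem \ref{Thm_Maz_map}, applied with exponents $(q,p)=(2,p)$ in the notation of that theorem — i.e. mapping $L^2\to L^p$ — so that its constant $q/p=2/p$ appears; transporting a distance from $L^p$ to $L^2$ then uses $M_{p,2}$ as the inverse map and the same inequality read backwards, which is legitimate since the bound is two-sided. With these points settled, the rest is the bookkeeping sketched above.
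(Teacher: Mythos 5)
Your overall strategy matches the paper's: transport the problem to $L^2$ via the Mazur map, use Banach--Lamperti to see that $\tilde\pi(g)=M_{p,2}\circ\pi(g)\circ M_{2,p}$ is a genuine orthogonal representation on $L^2$, check it has no invariant vectors, apply the Kazhdan bound there, and pull back through the Mazur modulus of continuity. The structural parts are fine. However, the key inequality and the final deduction are wrong as written.

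The direction of the Mazur estimate is reversed. Theorem~\ref{Thm_Maz_map} with $q=2$ gives, for unit vectors $\eta_1,\eta_2$ in $L^2$,
\begin{align*}
\tfrac{2}{p}\|\eta_1-\eta_2\|_2\;\leq\;\|M_{2,p}(\eta_1)-M_{2,p}(\eta_2)\|_p.
\end{align*}
Substituting $\eta_i=M_{p,2}(\xi_i)$ for unit $\xi_i\in L^p$ (so that $M_{2,p}(\eta_i)=\xi_i$) yields
\begin{align*}
\tfrac{2}{p}\,\|M_{p,2}(\xi_1)-M_{p,2}(\xi_2)\|_2\;\leq\;\|\xi_1-\xi_2\|_p,
\end{align*}
whereas you wrote $\tfrac{2}{p}\|\xi_1-\xi_2\|_p\leq\|M_{p,2}(\xi_1)-M_{p,2}(\xi_2)\|_2$. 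The latter is not what Mazur's theorem provides: the only lower bound Mazur gives for $\|M_{p,2}(\cdot)-M_{p,2}(\cdot)\|_2$ in terms of $\|\cdot-\cdot\|_p$ is of H\"older type, with exponent $p/2>1$, not the linear bound you asserted.

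This reversal breaks the logic of the concluding step. From your written chain $\tfrac{2}{p}\|\pi(g)\xi-\xi\|_p\leq\|\tilde\pi(g)\eta-\eta\|_2$ together with $\max_{g\in S}\|\tilde\pi(g)\eta-\eta\|_2\geq\kappa(S)$, one obtains an \emph{upper} bound $\max_g\tfrac{2}{p}\|\pi(g)\xi-\xi\|_p\leq\max_g\|\tilde\pi(g)\eta-\eta\|_2$ sitting next to a lower bound on the same right-hand side; these two facts do not combine to give a lower bound on $\max_g\|\pi(g)\xi-\xi\|_p$. What you need is the correctly oriented inequality $\|\pi(g)\xi-\xi\|_p\geq\tfrac{2}{p}\|\tilde\pi(g)\eta-\eta\|_2$, which then propagates through the maximum over $g\in S$ and the Kazhdan bound to give $\max_{g\in S}\|\pi(g)\xi-\xi\|_p\geq\tfrac{2}{p}\kappa(S)$. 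That is precisely the paper's computation: it writes $\pi(g)\xi=M_{2,p}(\tilde\pi(g)M_{p,2}\xi)$ and $\xi=M_{2,p}(M_{p,2}\xi)$ and applies the Mazur lower bound to the map $M_{2,p}$ going from $L^2$ to $L^p$. With that single sign flip corrected, your proposal coincides with the paper's proof.
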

\begin{proof}
By \cite[Lemma 4.2]{BFGM}, we have an orthogonal representation $\tilde{\pi}:G\to\mathbf{O}(L^2(X,\mu))$ given by
\begin{align*}
\tilde{\pi}(g)=M_{p,2}\circ\pi(g)\circ M_{2,p}\quad\forall g\in G,
\end{align*}
where $M_{2,p}$ and $M_{p,2}$ are defined as in \eqref{def_Mazur}. Now let $\xi$ be a unit vector in $L^p(X,\mu)$ and $g\in S$. By Theorem \ref{Thm_Maz_map},
\begin{align*}
\|\pi(g)\xi-\xi\|_p &= \|M_{2,p}\circ\tilde{\pi}(g)\circ M_{p,2}(\xi)-M_{2,p}\circ M_{p,2}(\xi)\|_p\\
&\geq\frac{2}{p}\|\tilde{\pi}(g)M_{p,2}(\xi)-M_{p,2}(\xi)\|_2.
\end{align*}
A similar argument, using the other inequality in Theorem \ref{Thm_Maz_map}, shows that $\tilde{\pi}$ does not have nontrivial $\Gamma$-invariant vectors. Combining this with Lemma \ref{Lem_Kazh_real}, we get
\begin{align*}
\max_{g\in S}\|\pi(g)\xi-\xi\|_p \geq\frac{2}{p}\kappa(G,\Gamma,S)\|M_{p,2}(\xi)\|_2 = \frac{2}{p}\kappa(G,\Gamma,S).
\end{align*}
This holds for every unit vector in $L^p(X,\mu)$. By homogeneity, we obtain the desired inequality for every $\xi\in L^p(X,\mu)$.
\end{proof}

\subsection{The Koopman representation for subgroups of $\operatorname{BiLip}_+^{\mathrm{bd}}(\R)$}
Let $G$ be a group acting on a measure space $(X,\mu)$ by measure class preserving transformations. This means that, for every $g\in G$, the measures $\mu$ and $g_*\mu$ are absolutely continuous with respect to each other. Then, for every $p\in[1,\infty)$, we can define the Koopman representation $\pi:G\to\mathbf{O}(L^p(X,\mu))$ by
\begin{align*}
\pi(g)\xi(x)=\xi(g^{-1}(x))\left(\frac{d(g_*\mu)}{d\mu}(x)\right)^{\frac{1}{p}}\quad\forall g\in G\ \ \forall \xi\in L^p(X,\mu)\ \ \forall x\in X.
\end{align*}
If $G$ is a subgroup of $\operatorname{BiLip}_+^{\mathrm{bd}}(\R)$, then the action of $G$ on $\R$ preserves the measure class of the Lebesgue measure. In this case, the Koopman representation $\pi:G\to\mathbf{O}(L^p(\R))$ is given by
\begin{align}\label{Koop_Leb}
\pi(g)\xi(x)=\xi(g^{-1}(x))Dg^{-1}(x)^{\frac{1}{p}}\quad\forall g\in G\ \ \forall\xi\in L^p(\R)\ \ \forall x\in\R.
\end{align}
This representation will be the main ingredient in the proof of Theorem \ref{Thm_T_biLip}. First, we show that the existence of nontrivial invariant vectors implies the existence of global fixed points.

\begin{lem}\label{Lem_no_inv_v}
Let $\Gamma$ be a subgroup of $\operatorname{BiLip}_+^{\mathrm{bd}}(\R)$, and let $\pi:\Gamma\to\mathbf{O}(L^p(\R))$ be the associated Koopman representation for $p\in[1,\infty)$. If $\Gamma$ does not have global fixed points, then $\pi$ does not have non-trivial invariant vectors.
\end{lem}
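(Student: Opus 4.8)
The plan is to argue by contradiction: suppose $\xi\in L^p(\R)$ is a nonzero $\pi$-invariant vector. First I would extract from $\xi$ a $G$-invariant measurable subset of $\R$ with finite, nonzero Lebesgue measure. The natural candidate is a superlevel set of $|\xi|$: for $t>0$ put $A_t=\{x\in\R:|\xi(x)|>t\}$. Since $\pi(g)$ acts by $\xi\mapsto\xi\circ g^{-1}\cdot (Dg^{-1})^{1/p}$ and $G$ consists of bi-Lipschitz maps, invariance of $\xi$ translates into an almost-everywhere relation $|\xi(g^{-1}(x))|(Dg^{-1}(x))^{1/p}=|\xi(x)|$. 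The Radon--Nikodym factor $(Dg^{-1})^{1/p}$ is bounded above and below by $\operatorname{Lip}(g)^{\pm 1/p}$, so it does not immediately preserve superlevel sets; to fix this I would instead work with the function $|\xi|^p$, whose integral against Lebesgue measure is a $G$-invariant finite measure $\nu$ on $\R$ (invariance of $\nu$ is exactly the change-of-variables statement $\int \psi\, d\nu$ is $G$-invariant, which follows from $\|\pi(g)\xi\|_p=\|\xi\|_p$ applied to truncations). Then $\nu$ is a nonzero finite Borel measure on $\R$ invariant under the $G$-action by homeomorphisms.

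The next step is to derive a contradiction from the existence of such an invariant finite measure together with the no-global-fixed-point hypothesis. Here I would use boundedness of displacement. Because every $g\in G$ satisfies $\sup_x|g(x)-x|=:D_g<\infty$, the action does not "escape to infinity" in a strong sense: for any $g$, $g$ maps $(-\infty,a]$ into $(-\infty,a+D_g]$ and $[a,\infty)$ into $[a-D_g,\infty)$. I would consider the function $F(a)=\nu((-\infty,a])$, which is nondecreasing, right-continuous, with $\lim_{a\to-\infty}F(a)=0$ and $\lim_{a\to\infty}F(a)=\nu(\R)<\infty$. Invariance of $\nu$ gives $\nu((-\infty,a])=\nu(g^{-1}((-\infty,a]))=\nu((-\infty,g^{-1}(a)])$ for every $g$ (using that $g$ is an increasing homeomorphism, so $g^{-1}((-\infty,a])=(-\infty,g^{-1}(a)]$), i.e. $F(a)=F(g^{-1}(a))$ for all $a$. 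Pick any regular value $c\in(0,\nu(\R))$ of $F$ and let $a_0=\sup\{a:F(a)\le c/2\}$ — more robustly, let $a_0$ be a point where $F$ "crosses" the level $c$; the set $\{a: F(a)=F(a_0)\}$ is then a nonempty closed interval (possibly a point), and the relation $F\circ g^{-1}=F$ forces $g$ to preserve this interval for every $g\in G$. Its endpoints (or its unique point) are then fixed by all of $G$, contradicting the absence of global fixed points.

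The main obstacle I anticipate is the previous paragraph's final step: a level set $\{a:F(a)=\ell\}$ need not be a single point, so I must choose the level $\ell$ carefully and argue that $G$ fixes an endpoint of the corresponding interval. One clean way: since $\nu\ne 0$, choose $\ell$ with $0<\ell<\nu(\R)$ and set $b=\inf\{a\in\R:F(a)\ge\ell\}$; this is a well-defined real number. From $F=F\circ g^{-1}$ and monotonicity one gets $\{a:F(a)\ge\ell\}=g(\{a:F(a)\ge\ell\})$, hence $b=g(b)$ for every $g\in G$, so $b$ is a global fixed point — contradiction. The only delicate points are checking that $b$ is finite (it is, by $F(-\infty)=0<\ell<\nu(\R)=F(+\infty)$ and monotonicity, so the infimum is over a nonempty set bounded below) and that $\{F\ge\ell\}$ is genuinely $g$-invariant (immediate from $F\circ g^{-1}=F$ since $g^{-1}$ is a bijection of $\R$). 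I would also note at the start that it suffices to treat $\xi\ne 0$ and that the finiteness of $\nu$ is what makes $F$ bounded, which is where $p<\infty$ and $\xi\in L^p$ are used; the bounded-displacement hypothesis is what turns an invariant measure into a global fixed point, and dropping it would allow, e.g., translation actions with no invariant finite measure issue.
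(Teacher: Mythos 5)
Your argument is correct, and it takes a genuinely different route from the paper's. You pass from the $\pi$-invariant vector $\xi$ to the $G$-invariant finite Borel measure $\nu = |\xi|^p\,dx$ (this intertwining is indeed a direct consequence of $\pi(g)\xi=\xi$: the a.e.\ identity $|\xi\circ g^{-1}|^p\,Dg^{-1}=|\xi|^p$ plus change of variables gives $\nu(g^{-1}A)=\nu(A)$), and then you manufacture a global fixed point as $b=\inf\{a:F(a)\ge\ell\}$ from the distribution function $F$ of $\nu$. This is the standard fact that a subgroup of $\operatorname{Homeo}_+(\R)$ preserving a nonzero finite measure has a global fixed point, and your justification of it is complete. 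The paper's proof is shorter and stays entirely in $L^p$: it invokes the remark that no global fixed point forces unbounded orbits, picks $M$ with $\int_{-M}^M|\xi|^p>\tfrac12$ and a $g$ with $g([-M,M])\cap[-M,M]=\varnothing$, and then the change of variables $\int_{g(-M)}^{g(M)}|\pi(g)\xi|^p=\int_{-M}^M|\xi|^p$ together with $\pi(g)\xi=\xi$ pushes $\|\xi\|_p^p$ above $1$. Both proofs reduce, at bottom, to the same conservation of $|\xi|^p$-mass under $g$; yours packages it as an invariant-measure statement and produces an explicit fixed point rather than a direct norm contradiction.

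Two small corrections to the surrounding commentary. First, your claim that ``invariance of $\nu$ follows from $\|\pi(g)\xi\|_p=\|\xi\|_p$ applied to truncations'' does not quite work: if you truncate $\xi$ to $\xi\mathds{1}_A$ and apply the isometry, you just recover $\nu(A)=\nu(A)$. What you actually need, and what you use implicitly, is the pointwise identity coming from $\pi(g)\xi=\xi$, not merely norm preservation. Second, the bounded-displacement hypothesis plays no role in your second step: the passage from a $G$-invariant nonzero finite measure to a global fixed point works for any subgroup of $\operatorname{Homeo}_+(\R)$ (your own ``clean way'' never invokes $\sup_x|g(x)-x|<\infty$, and indeed a nontrivial translation cannot preserve a nonzero finite measure on $\R$, since its distribution function would be both periodic and monotone). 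In both your argument and the paper's, bounded displacement is simply part of the ambient hypotheses of the section and is not used in this lemma; what is genuinely used is $p<\infty$, which you correctly identify as what makes $\nu$ finite.
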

\begin{proof}
Assume by contradiction that $\xi$ is a $\pi$-invariant vector with $\|\xi\|_p=1$. Take $M>0$ such that
\begin{align*}
\int_{-M}^M|\xi(x)|^p\, dx >\frac{1}{2}.
\end{align*}
By Remark \ref{Rmk_ubdd_orb}, the action $\Gamma\curvearrowright\R$ has unbounded orbits. Hence there is $g\in \Gamma$ such that
\begin{align*}
[-M,M]\cap[g(-M),g(M)]=\varnothing.
\end{align*}
Since $\pi(g)\xi=\xi$, we have
\begin{align*}
1 &= \|\xi\|_p^p\\
&\geq \int_{-M}^M|\xi(x)|^p\, dx + \int_{g(-M)}^{g(M)}|\pi(g)\xi(x)|^p\, dx\\
&= 2\int_{-M}^M|\xi(x)|^p\, dx\\
&> 1,
\end{align*}
which is a contradiction.
\end{proof}

\section{Proof of the main result}\label{S_main_res}
This section is devoted to the proof of Theorem \ref{Thm_T_biLip}. We will show that, under the hypotheses of Theorem \ref{Thm_T_biLip},
\begin{align}\label{kappa<Phi-1}
\kappa(G,\Gamma,S)\leq\Phi^{-1}\left(\max_{g\in S}\operatorname{BiLip}(g)\right),
\end{align}
where $\Phi^{-1}:[1,\infty)\to[0,\sqrt{2})$ is given by
\begin{align*}
\Phi^{-1}(t)=\min\left\{\tfrac{1}{2}\log(t),\sqrt{2}\left(1-t^{-1/2}\right)^{1/2}\right\}\quad \forall t\in[1,\infty).
\end{align*}

\subsection{An upper bound for $\kappa(G,\Gamma,S)$ for large values of $\operatorname{BiLip}(g)$}
We begin with the bound $\kappa(G,\Gamma,S)\leq\sqrt{2}\left(1-\operatorname{BiLip}(g)^{-1/2}\right)^{1/2}$ in \eqref{kappa<Phi-1}.

\begin{lem}\label{Lem_estimate1}
Let $G$ be a finitely generated subgroup of $\operatorname{BiLip}_+^{\mathrm{bd}}(\R)$, and let $\Gamma$ be a subgroup of $G$ without global fixed points. If the pair $(G,\Gamma)$ satisfies Property (T), then, for every finite generating set $S\subset G$,
\begin{align*}
\kappa(G,\Gamma,S)\leq \max_{g\in S}\sqrt{2}\left(1-\operatorname{BiLip}(g)^{-1/2}\right)^{1/2}.
\end{align*}
\end{lem}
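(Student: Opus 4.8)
The plan is to use the Koopman representation $\pi:G\to\mathbf{O}(L^2(\R))$ from \eqref{Koop_Leb} with $p=2$, which by Lemma \ref{Lem_no_inv_v} has no nontrivial invariant vectors, so by Lemma \ref{Lem_Kazh_real} we have $\max_{g\in S}\|\pi(g)\xi-\xi\|_2\geq\kappa(S)\|\xi\|_2$ for all $\xi\in L^2(\R)$. The idea is to plug in a cleverly chosen test vector $\xi$ and compute (or estimate from above) the left-hand side in terms of the Lipschitz constants, then invert the resulting inequality.

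First I would record the key Hilbert-space computation: for $\|\xi\|_2=1$ one has $\|\pi(g)\xi-\xi\|_2^2=2-2\langle\pi(g)\xi,\xi\rangle$, and
\begin{align*}
\langle\pi(g)\xi,\xi\rangle=\int_\R\xi(g^{-1}(x))\,Dg^{-1}(x)^{1/2}\,\xi(x)\,dx.
\end{align*}
The natural test vector is a normalised indicator, say $\xi=|I|^{-1/2}\mathbf{1}_I$ for an interval $I$. With this choice $\langle\pi(g)\xi,\xi\rangle=|I|^{-1}\int_{I\cap g(I)}Dg^{-1}(x)^{1/2}\,dx$, since $\xi(g^{-1}(x))\neq0$ exactly when $x\in g(I)$. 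Now $Dg^{-1}(x)^{1/2}\geq\operatorname{Lip}(g)^{-1/2}$ almost everywhere by Remark \ref{Rmk_Lip_D}, so $\langle\pi(g)\xi,\xi\rangle\geq\operatorname{Lip}(g)^{-1/2}\,|I\cap g(I)|/|I|$. Hence $\|\pi(g)\xi-\xi\|_2^2\leq 2-2\operatorname{Lip}(g)^{-1/2}\,|I\cap g(I)|/|I|$. To make this bound effective I need the overlap ratio $|I\cap g(I)|/|I|$ to be close to $1$ for every $g\in S$ simultaneously, which is where bounded displacement enters: if $D=\max_{g\in S}\sup_x|g(x)-x|<\infty$, then $g(I)\supseteq$ (a translate-free core of $I$), more precisely for $I=[-R,R]$ we get $|I\setminus g(I)|\leq 2D$ and likewise $|g(I)\setminus I|\leq 2D$, so $|I\cap g(I)|/|I|\geq 1-2D/R\to1$ as $R\to\infty$. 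Letting $R\to\infty$ yields $\kappa(S)^2\leq\max_{g\in S}(2-2\operatorname{Lip}(g)^{-1/2})$, which rearranges to the claimed bound $\kappa(S)\leq\max_{g\in S}\sqrt{2}\,(1-\operatorname{Lip}(g)^{-1/2})^{1/2}$.

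A subtlety to handle carefully: the inequality from Lemma \ref{Lem_Kazh_real} gives $\kappa(S)\leq\max_{g\in S}\|\pi(g)\xi-\xi\|_2$ for \emph{each fixed} unit vector $\xi$, and the max over $g$ does not commute with the limit over $R$ in an entirely trivial way — but since $S$ is finite, $\max_{g\in S}\|\pi(g)\xi_R-\xi_R\|_2\leq\max_{g\in S}(2-2\operatorname{Lip}(g)^{-1/2}(1-2D/R))^{1/2}$, and taking $R\to\infty$ on the right is clean because it is a finite maximum of convergent expressions. I expect the main obstacle to be purely bookkeeping: verifying the overlap estimate $|I\cap g(I)|\geq|I|-4D$ (or the correct constant) rigorously from bounded displacement — one must check that $g([-R,R])$ is an interval containing $[-R+D,R-D]$, using that $g$ is increasing together with $|g(x)-x|\leq D$ — and making sure the whole argument needs only the existence of some such test vector rather than a sup over all of $L^2$. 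No genuinely hard analysis is involved; the content is choosing the indicator test function and exploiting bounded displacement to kill the boundary error term.
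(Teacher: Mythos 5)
Your proposal is correct and is essentially the same proof as the paper's: same Koopman representation on $L^2(\R)$, same normalised indicator test vectors $\xi_R=|[-R,R]|^{-1/2}\mathbf{1}_{[-R,R]}$, same pointwise lower bound $Dg^{-1}\geq\operatorname{Lip}(g)^{-1}$, same use of bounded displacement to make the overlap ratio tend to $1$, and the same passage to the limit. (Minor slip: for $I=[-R,R]$ one gets $|I\cap g(I)|/|I|\geq 1-D/R$, not $1-2D/R$, but this weaker bound still tends to $1$ and the conclusion is unaffected.)
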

\begin{proof}
Let $\pi:G\to\mathbf{O}(L^2(\R))$ be the Koopman representation, defined as in \eqref{Koop_Leb}. By Lemma \ref{Lem_no_inv_v}, this representation does not have nontrivial $\Gamma$-invariant vectors. Fix a finite generating set $S\subset G$, and define
\begin{align*}
M&=\max_{g\in S}\operatorname{disp}(g),\\
L&=\max_{g\in S}\operatorname{BiLip}(g).
\end{align*}
For every $n\geq 1$, let us define a unit vector $\xi_n\in L^2(\R)$ by
\begin{align*}
\xi_n=(2n)^{-\frac{1}{2}}\mathds{1}_{[-n,n]},
\end{align*}
where $\mathds{1}_{[-n,n]}$ denotes the indicator function of the interval $[-n,n]$. For every $g\in S$,
\begin{align*}
\|\pi(g)\xi_n-\xi_n\|^2 =2-2\langle\pi(g)\xi_n,\xi_n\rangle,
\end{align*}
and
\begin{align*}
\langle\pi(g)\xi_n,\xi_n\rangle
&= (2n)^{-1}\int_{-n}^n \mathds{1}_{[g(-n),g(n)]}(x)Dg^{-1}(x)^{\frac{1}{2}}\, dx\\
&\geq (2n)^{-1}L^{-\frac{1}{2}}\int_{-n}^n \mathds{1}_{[g(-n),g(n)]}(x)\, dx.
\end{align*}
For $n$ large enough, we have
\begin{align*}
\max\{-n,g(-n)\} \leq -n+M \leq n-M \leq \min\{n,g(n)\},
\end{align*}
which shows that
\begin{align*}
\langle\pi(g)\xi_n,\xi_n\rangle &\geq (2n)^{-1}L^{-\frac{1}{2}}(n-M-(-n+M))\\
&=\frac{n-M}{n}L^{-\frac{1}{2}}.
\end{align*}
Hence, since $\|\xi_n\|=1$, by Lemma \ref{Lem_Kazh_real},
\begin{align*}
\kappa(G,\Gamma,S)^2 &\leq \|\pi(g)\xi_n-\xi_n\|^2\\
&\leq 2-\frac{2(n-M)}{n}L^{-\frac{1}{2}}
\end{align*}
Taking the limit $n\to\infty$, we obtain
\begin{align*}
\kappa(G,\Gamma,S) & \leq \sqrt{2}\left(1-L^{-\frac{1}{2}}\right)^\frac{1}{2}\\
&= \max_{g\in S}\sqrt{2}\left(1-\operatorname{BiLip}(g)^{-1/2}\right)^{1/2}.\qedhere
\end{align*}
\end{proof}

\subsection{An upper bound for $\kappa(G,\Gamma,S)$ for small values of $\operatorname{BiLip}(g)$}

Now we deal with the logarithmic bound in \eqref{kappa<Phi-1}. For this purpose, we need to consider the Koopman representation on $L^p(\R)$. We will make use of the following estimate.

\begin{lem}\label{Lem_log(L)}
Let $L>1$, $x\in[L^{-1},L]$, and $p>\log(L)$. Then
\begin{align*}
|x^{\frac{1}{p}}-1|\leq\frac{\log(L)}{p-\log(L)}.
\end{align*}
\end{lem}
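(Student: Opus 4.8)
The plan is to reduce the claimed inequality to an elementary estimate on the function $t\mapsto e^{t/p}-1$ for $t$ in a bounded interval, by passing to logarithms. Writing $x = e^{t}$ with $t = \log x$, the hypothesis $x\in[L^{-1},L]$ becomes $|t|\le \log L$, and we must bound $|e^{t/p}-1|$. First I would treat the two signs of $t$ separately. For $t\ge 0$ we have $e^{t/p}-1\ge 0$, and for $t\le 0$ we have $1-e^{t/p}\ge 0$; since $1-e^{-s}\le e^{s}-1$ for $s\ge 0$, the case $t\ge 0$ dominates, so it suffices to prove $e^{s/p}-1\le \frac{\log L}{p-\log L}$ for $0\le s\le \log L$, and by monotonicity only the endpoint $s=\log L$ matters.

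So the whole statement comes down to showing $e^{(\log L)/p}-1\le \dfrac{\log L}{\,p-\log L\,}$ under the assumption $p>\log L$. Set $a=\log L>0$ and $q=p>a$; I want $e^{a/q}\le 1+\dfrac{a}{q-a}=\dfrac{q}{q-a}=\dfrac{1}{1-a/q}$. Putting $u=a/q\in(0,1)$, this is exactly the classical inequality $e^{u}\le \dfrac{1}{1-u}$ for $u\in[0,1)$, equivalently $(1-u)e^{u}\le 1$. That last inequality is immediate: the function $\varphi(u)=(1-u)e^{u}$ satisfies $\varphi(0)=1$ and $\varphi'(u)=-u e^{u}\le 0$ on $[0,1)$, so $\varphi(u)\le 1$ throughout.

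Assembling these pieces: given $x\in[L^{-1},L]$, write $t=\log x$ so $|t|\le \log L<p$. If $t\ge 0$, then $0\le x^{1/p}-1=e^{t/p}-1\le e^{(\log L)/p}-1\le \frac{\log L}{p-\log L}$ by the monotonicity of $e^{\cdot}$ and the inequality $(1-u)e^{u}\le 1$ with $u=\frac{\log L}{p}\in(0,1)$. If $t<0$, then $0\le 1-x^{1/p}=1-e^{t/p}\le 1-e^{-(\log L)/p}\le e^{(\log L)/p}-1$, which is bounded by the same quantity. In both cases $|x^{1/p}-1|\le \frac{\log L}{p-\log L}$, as required.

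There is no real obstacle here; the only thing to be careful about is the bookkeeping with signs — making sure the $t<0$ branch is genuinely dominated by the $t>0$ branch — and checking that $u=\frac{\log L}{p}$ indeed lies in $(0,1)$, which is precisely the content of the hypothesis $p>\log L$. Everything else is the one-line convexity-type estimate $(1-u)e^{u}\le 1$.
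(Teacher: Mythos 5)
Your proof is correct and is essentially the paper's argument: both hinge on the estimate $e^{u}\leq (1-u)^{-1}$ for $u\in[0,1)$ applied at $u=\log(L)/p$, together with a reduction to the case $x\geq 1$. The only cosmetic difference is in the $x<1$ branch, where you invoke the symmetry $1-e^{-s}\leq e^{s}-1$ to fold it into the $x\geq 1$ case, whereas the paper uses the complementary lower bound $1+t\leq e^{t}$ directly and then crudely discards the $\log(L)$ in the denominator.
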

\begin{proof}
We will use the following classical inequalities for the exponential function:
\begin{align*}
1+t\leq e^t \leq 1+\frac{t}{1-t}\quad \forall t\in(-\infty,1).
\end{align*}
For $t=\frac{1}{p}\log(x)$, we get
\begin{align*}
\frac{1}{p}\log(x)\leq x^{\frac{1}{p}}-1\leq \frac{\log(x)}{p-\log(x)}.
\end{align*}
Assume first that $x\geq 1$. Then
\begin{align*}
|x^{\frac{1}{p}}-1|=x^{\frac{1}{p}}-1\leq \frac{\log(x)}{p-\log(x)}\leq\frac{\log(L)}{p-\log(L)}.
\end{align*}
On the other hand, if $x<1$, then
\begin{align*}
|x^{\frac{1}{p}}-1|=1-x^{\frac{1}{p}}\leq \frac{1}{p}\log(x^{-1})\leq\frac{\log(L)}{p}\leq\frac{\log(L)}{p-\log(L)}.&\qedhere
\end{align*}
\end{proof}

Now we are ready to obtain our second estimate for $\kappa(G,\Gamma,S)$.

\begin{lem}\label{Lem_estimate2}
Let $G$ be a finitely generated subgroup of $\operatorname{BiLip}_+^{\mathrm{bd}}(\R)$, and let $\Gamma$ be a subgroup of $G$ without global fixed points. If the pair $(G,\Gamma)$ satisfies Property (T), then, for every finite generating set $S\subset G$,
\begin{align*}
\kappa(G,\Gamma,S)\leq \frac{1}{2}\max_{g\in S}\log(\operatorname{BiLip}(g)).
\end{align*}
\end{lem}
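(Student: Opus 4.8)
The plan is to run the argument of Lemma \ref{Lem_estimate1} again, but this time with the Koopman representation $\pi:G\to\mathbf{O}(L^p(\R))$ for large $p$, using the $L^p$-version of Property (T) from Proposition \ref{Prop_p-Kazh} in place of Lemma \ref{Lem_Kazh_real}, and the pointwise estimate of Lemma \ref{Lem_log(L)} to control the Radon--Nikodym factor.

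First I would fix a finite generating set $S$, set $M=\max_{g\in S}\sup_{x\in\R}|g(x)-x|$ and $\ell=\max_{g\in S}\log(\operatorname{Lip}(g))$, and fix $p\in[2,\infty)$ with $p>\ell$. For $n\geq 1$ consider the unit vectors $\xi_n=(2n)^{-1/p}\mathds{1}_{[-n,n]}\in L^p(\R)$, so that by \eqref{Koop_Leb} one has $\pi(g)\xi_n=(2n)^{-1/p}\,\mathds{1}_{[g(-n),g(n)]}\,(Dg^{-1})^{1/p}$. For $n>M$ the intervals $[-n,n]$ and $[g(-n),g(n)]$ overlap and their symmetric difference has measure at most $2M$; I would therefore split $\|\pi(g)\xi_n-\xi_n\|_p^p$ into the contribution of the intersection $I_n=[-n,n]\cap[g(-n),g(n)]$ and that of the symmetric difference. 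On $I_n$ the integrand equals $(2n)^{-1}|Dg^{-1}(x)^{1/p}-1|^p$, which by Remark \ref{Rmk_Lip_D} (so that $Dg^{-1}(x)\in[\operatorname{Lip}(g)^{-1},\operatorname{Lip}(g)]$ a.e.) and Lemma \ref{Lem_log(L)} is at most $(2n)^{-1}\bigl(\tfrac{\log\operatorname{Lip}(g)}{p-\log\operatorname{Lip}(g)}\bigr)^p$; integrating over $I_n\subseteq[-n,n]$ gives a contribution at most $\bigl(\tfrac{\log\operatorname{Lip}(g)}{p-\log\operatorname{Lip}(g)}\bigr)^p$ (this bound reads as $0$ when $\operatorname{Lip}(g)=1$). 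On the symmetric difference the integrand is bounded by $(2n)^{-1}\operatorname{Lip}(g)$, since on $[g(-n),g(n)]\setminus[-n,n]$ one has $|\pi(g)\xi_n(x)|^p=(2n)^{-1}Dg^{-1}(x)\leq(2n)^{-1}\operatorname{Lip}(g)$ and on $[-n,n]\setminus[g(-n),g(n)]$ one has $|\xi_n(x)|^p=(2n)^{-1}$; hence this contribution is at most $\tfrac{2M}{2n}\operatorname{Lip}(g)\to 0$ as $n\to\infty$. Therefore $\limsup_{n\to\infty}\|\pi(g)\xi_n-\xi_n\|_p\le\tfrac{\log\operatorname{Lip}(g)}{p-\log\operatorname{Lip}(g)}$ for every $g\in S$.

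Next, by Lemma \ref{Lem_no_inv_v} the Koopman representation $\pi$ has no nontrivial invariant vectors, so Proposition \ref{Prop_p-Kazh} applied to the unit vectors $\xi_n$ gives $\max_{g\in S}\|\pi(g)\xi_n-\xi_n\|_p\ge\tfrac{2\kappa(S)}{p}$ for every $n$. Since $S$ is finite, $\limsup_{n\to\infty}$ commutes with $\max_{g\in S}$, so combining this with the previous paragraph yields $\tfrac{2\kappa(S)}{p}\le\max_{g\in S}\tfrac{\log\operatorname{Lip}(g)}{p-\log\operatorname{Lip}(g)}$; as $t\mapsto t/(p-t)$ is increasing on $[0,p)$, the right-hand side equals $\tfrac{\ell}{p-\ell}$, whence $\kappa(S)\le\tfrac{p\ell}{2(p-\ell)}$. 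Letting $p\to\infty$ gives $\kappa(S)\le\tfrac{\ell}{2}=\tfrac12\max_{g\in S}\log(\operatorname{Lip}(g))$, as claimed.

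The main obstacle is the bookkeeping in the $L^p$-estimate of $\|\pi(g)\xi_n-\xi_n\|_p$: one must cleanly separate the ``bulk'' term on the overlapping interval $I_n$, where Lemma \ref{Lem_log(L)} forces the Radon--Nikodym factor to be close to $1$ uniformly, from the ``boundary'' term on the symmetric difference, and verify that the latter is negligible in the limit $n\to\infty$ while the former already produces the sharp $p$-dependent bound $\tfrac{\log\operatorname{Lip}(g)}{p-\log\operatorname{Lip}(g)}$. Once this estimate is in place, the remaining steps (invoking Proposition \ref{Prop_p-Kazh}, interchanging $\max$ with $\limsup$, and optimising over $p$) are routine.
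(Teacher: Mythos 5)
Your argument is correct and follows essentially the same route as the paper: same test vectors $\xi_n=(2n)^{-1/p}\mathds{1}_{[-n,n]}$ in $L^p(\R)$, same use of Lemma \ref{Lem_no_inv_v}, Proposition \ref{Prop_p-Kazh}, and Lemma \ref{Lem_log(L)}, and the same optimisation $p\to\infty$ at the end (your bound $\kappa(S)\le p\ell/\bigl(2(p-\ell)\bigr)$ is algebraically identical to the paper's intermediate estimate). The only difference is cosmetic bookkeeping: you split the integral as (intersection $I_n$) plus (symmetric difference, of measure $\le 2M$), whereas the paper splits it into two boundary strips of width $2M$ and one overlapping bulk term over all of $[-n-M,n+M]$; your decomposition is arguably a little cleaner, but both give the same $\limsup$.
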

\begin{proof}
Let $S\subset G$ be a finite generating set. As in the proof of Lemma \ref{Lem_estimate1}, let us define the following constants:
\begin{align*}
M&=\max_{g\in S}\operatorname{disp}(g),\\
L&=\max_{g\in S}\operatorname{BiLip}(g).
\end{align*}
Let $p>\log(L)$, and let $\pi:G\to\mathbf{O}(L^p(\R))$ be the Koopman representation:
\begin{align*}
\pi(g)\xi(x)=\xi(g^{-1}(x))Dg^{-1}(x)^{\frac{1}{p}}\quad\forall g\in G\ \ \forall\xi\in L^p(\R)\ \ \forall x\in\R.
\end{align*}
By Lemma \ref{Lem_no_inv_v}, $\pi$ does not have nontrivial $\Gamma$-invariant vectors. Hence, by Proposition \ref{Prop_p-Kazh}, we have
\begin{align}\label{s_gap_Lp}
\max_{g\in S}\|\pi(g)\xi-\xi\|_p\geq\frac{2}{p}\kappa(G,\Gamma,S)\|\xi\|_p\quad\forall\xi\in L^p(\R).
\end{align}
For every $n\geq 1$, let us define
\begin{align*}
\xi_n=(2n)^{-\frac{1}{p}}\mathds{1}_{[-n,n]}.
\end{align*}
Observe that $\|\xi_n\|_p=1$ for all $n\geq 1$. On the other hand, for every $g\in S$,
\begin{align*}
\|\pi(g)\xi_n-\xi_n\|_p^p=(2n)^{-1}\int_{-\infty}^\infty \left|\mathds{1}_{[g(-n),g(n)]}(x)Dg^{-1}(x)^{\frac{1}{p}}-\mathds{1}_{[-n,n]}(x)\right|^p\, dx.
\end{align*}
Now, for every $n$ large enough,
\begin{align*}
-n-M \leq \min\{-n,g(-n)\} \leq \max\{-n,g(-n)\} \leq -n+M,
\end{align*}
and
\begin{align*}
n-M \leq \min\{n,g(n)\} \leq \max\{n,g(n)\} \leq n+M.
\end{align*}
Hence
\begin{align*}
2n\|\pi(g)\xi_n-\xi_n\|_p^p &\leq \max\left\{\int_{-n-M}^{-n+M}Dg^{-1}(x)\, dx, \int_{-n-M}^{-n+M}1\, dx\right\}\\
&\quad + \int_{-n-M}^{n+M}\left|Dg^{-1}(x)^{\frac{1}{p}}-1\right|^p\, dx\\
&\quad + \max\left\{\int_{n-M}^{n+M}Dg^{-1}(x)\, dx, \int_{n-M}^{n+M}1\, dx\right\}\\
&\leq 4ML + \int_{-n-M}^{n+M}\left|Dg^{-1}(x)^{\frac{1}{p}}-1\right|^p\, dx.
\end{align*}
Thus, by Lemma \ref{Lem_log(L)}, for every $g\in S$ and every $n$ large enough,
\begin{align*}
\|\pi(g)\xi_n-\xi_n\|_p^p\leq \frac{4ML}{2n} + \frac{2(n+M)}{2n}\left(\frac{\log(L)}{p-\log(L)}\right)^p.
\end{align*}
Therefore
\begin{align*}
\limsup_{n\to\infty}\max_{g\in S}\|\pi(g)\xi_n-\xi_n\|_p\leq \frac{\log(L)}{p-\log(L)}.
\end{align*}
Combining this with \eqref{s_gap_Lp}, we get
\begin{align*}
\frac{2}{p}\kappa(G,\Gamma,S)\leq \frac{\log(L)}{p-\log(L)}.
\end{align*}
Equivalently,
\begin{align*}
\kappa(G,\Gamma,S)\leq \frac{\log(L)}{2\left(1-\frac{1}{p}\log(L)\right)}.
\end{align*}
Letting $p\to\infty$, this yields
\begin{align*}
\kappa(G,\Gamma,S)&\leq\frac{1}{2}\log(L)\\
&=\frac{1}{2}\log\left(\max_{g\in S}\operatorname{BiLip}(g)\right).\qedhere
\end{align*}
\end{proof}

We can now prove Theorem \ref{Thm_T_biLip}.

\begin{proof}[Proof of Theorem \ref{Thm_T_biLip}]
Let $S$ be a finite generating set of $G$, and let $\kappa(G,\Gamma,S)$ be the associated Kazhdan constant. By Lemmas \ref{Lem_estimate1} and \ref{Lem_estimate2},
\begin{align*}
\kappa(G,\Gamma,S)\leq\min\left\{\tfrac{1}{2}\log(L),\sqrt{2}\left(1-L^{-1/2}\right)^{1/2}\right\}=\Phi^{-1}(L),
\end{align*}
where
\begin{align*}
L=\max_{g\in S}\operatorname{BiLip}(g).
\end{align*}
Hence $L\geq\Phi(\kappa(G,\Gamma,S))$, where $\Phi$ is defined as in \eqref{def_Phi}.
\end{proof}

\section{The case of $\F_2\ltimes\Z^2$}\label{S_sdp}

This section is devoted to the proof of Theorem \ref{Thm_F2xZ2}. The main ingredient is an estimate for the Kazhdan constant of $(\F_2\ltimes\Z^2,\Z^2)$, which is essentially an adaptation of \cite[\S 2]{Sha} to our setting. We will denote by $\F_2$ the (free) subgroup of $\operatorname{SL}_2(\Z)$ generated by the matrices
\begin{align*}
\left(\begin{array}{cc}
1 & 2 \\ 0 & 1
\end{array}\right), &&
\left(\begin{array}{cc}
1 & 0 \\ 2 & 1
\end{array}\right).
\end{align*}
The semidirect product $\F_2\ltimes\Z^2$ is defined as the set $\F_2\times\Z^2$, endowed with the product
\begin{align*}
(g,n)(h,m)=(gh,gm+n)\quad\forall g,h\in\F_2\ \ \forall n,m\in\Z^2,
\end{align*}
where the action of $\F_2$ on $\Z^2$ is given by matrix multiplication. We identify $\F_2$ and $\Z^2$ with the subgroups $\F_2\times\{0\}$ and $\{\mathrm{Id}\}\times\Z^2$ respectively. Under this identification, the action of $\F_2$ on $\Z^2$ becomes the action by conjugation in $\F_2\ltimes\Z^2$. In particular, if $\pi:\F_2\ltimes\Z^2\to\mathbf{U}(\mathcal{H})$ is a unitary representation,
\begin{align*}
\pi(\mathrm{Id},gn)=\pi(g,0)\pi(\mathrm{Id},n)\pi(g,0)^{-1}\quad\forall g\in\F_2\ \ \forall n\in\Z^2,
\end{align*}
which we will write in the following simplified form:
\begin{align}\label{pi(gn)}
\pi(gn)=\pi(g)\pi(n)\pi(g)^{-1}\quad\forall g\in\F_2\ \ \forall n\in\Z^2.
\end{align}

\subsection{Spectral measures}
We review now the construction of the projection-valued measure associated to a unitary representation of $\Z^2$. This may be viewed as a particular case of the spectral theorem for two commuting normal operators; we refer the reader to \cite[Chapter VII]{ReeSim} and \cite[Chapter 7]{Wei} for details.

Let
\begin{align*}
\T^2=\T\times\T=\left\{(z_1,z_2)\in\C^2\ \mid\ |z_1|=|z_2|=1\right\},
\end{align*}
and let $C(\T^2)$ denote the algebra of continuous functions from $\T^2$ to $\C$, which can be identified with the full group $\mathbf{C}^*$-algebra $\mathrm{C}^*(\Z^2)$ via the Fourier transform; see e.g. \cite[\S 7.1]{Fol}. Let $\pi:\Z^2\to\mathbf{U}(\mathcal{H})$ be a unitary representation, and let $\mathbf{B}(\mathcal{H})$ denote the algebra of bounded operators on $\mathcal{H}$. Then there is a unital $\ast$-homomorphism $\Psi:C(\T^2)\to\mathbf{B}(\mathcal{H})$ given by
\begin{align*}
\Psi(f)=\sum_{n\in\Z^2}\hat{f}(n)\pi(n)\quad\forall f\in C(\T^2),
\end{align*}
where
\begin{align*}
\hat{f}(n)=\int_{\T}\int_{\T}f(z_1,z_2)z_1^{-n_1}z_2^{-n_2}\,dz_1\,dz_2\quad\forall n=(n_1,n_2)\in\Z^2.
\end{align*}
Here $dz_i$ denotes the integration with respect to the normalised Lebesgue measure on $\T$. For every $\xi\in\mathcal{H}$, we can define a Borel measure $\mu_\xi$ on $\T^2$ by
\begin{align*}
\int_{\T^2}f(z)\,d\mu_\xi(z) = \langle\Psi(f)\xi,\xi\rangle\quad\forall f\in C(\T^2).
\end{align*}
This allows us to extend $\Psi$ to the algebra of bounded Borel functions $\mathcal{B}_b(\T^2)$ in the following way. For every $f\in \mathcal{B}_b(\T^2)$, we define $\Psi(f)\in\mathbf{B}(\mathcal{H})$ as the unique operator satisfying
\begin{align*}
\langle\Psi(f)\xi,\xi\rangle = \int_{\T^2}f(z)\,d\mu_\xi(z)
\end{align*}
for every $\xi\in\mathbf{B}(\mathcal{H})$. More precisely, we can define $\Psi(f)\xi$ implicitly via the following polarisation-type formula:
\begin{align*}
\langle\Psi(f)\xi,\eta\rangle=\frac{1}{4}\sum_{k=0}^3\ii^k\int_{\T^2}f(z)\,d\mu_{(\xi+\ii^k\eta)}(z)\quad\forall\eta\in\mathcal{H}.
\end{align*}
In particular, for every Borel subset $B\subseteq\T^2$, there is a projection $\mathrm{P}(B)$ on $\mathcal{H}$ given by
\begin{align}\label{P(B)=Psi}
\mathrm{P}(B)=\Psi(\mathds{1}_B).
\end{align}
We say that $\mathrm{P}$ is a projection-valued measure on $\T^2$. The following proposition seems to be well known. We include its proof here for completeness.

\begin{prop}\label{Prop_pvm}
Let $\pi:\F_2\ltimes\Z^2\to\mathbf{U}(\mathcal{H})$ be a unitary representation, and let $\mathrm{P}$ be the projection-valued measure associated to the restriction of $\pi$ to $\Z^2$. Then $\mathrm{P}$ satisfies the following properties:
\begin{itemize}
\item[a)] The operator $\mathrm{P}(\{(1,1)\})$ is the orthogonal projection onto the subspace of $\Z^2$-invariant vectors of $\mathcal{H}$.
\item[b)] For every Borel subset $B\subseteq\T^2$ and every $g\in\F_2$,
\begin{align*}
\mathrm{P}\left(\left(g^\intercal\right)^{-1}B\right)=\pi(g)\mathrm{P}(B)\pi(g)^{-1},
\end{align*}
where $g^\intercal$ denotes the transpose of $g$, and the action of $\F_2$ on $\T^2$ is the matrix multiplication on the quotient $\R^2/\Z^2$.
\end{itemize}
\end{prop}
\begin{proof}
Let $\mathcal{H}_0$ denote the subspace of $\Z^2$-invariant vectors of $\mathcal{H}$. For every $n\in\Z^2$, let $e_n$ be the function
\begin{align}\label{e_n}
e_n(z)=z_1^{n_1}z_2^{n_2}\quad\forall z=(z_1,z_2)\in\T^2,
\end{align}
where $n=(n_1,n_2)$. Then, for all $\xi\in\mathcal{H}$ and $n\in\Z^2$,
\begin{align*}
\pi(n)\mathrm{P}\left(\{(1,1)\}\right)\xi=\Psi\left(e_n\mathds{1}_{\{(1,1)\}}\right)\xi=\Psi\left(\mathds{1}_{\{(1,1)\}}\right)\xi=\mathrm{P}\left(\{(1,1)\}\right)\xi,
\end{align*}
which shows that $\mathrm{P}(\{(1,1)\})\xi$ belongs to $\mathcal{H}_0$. Hence $\mathrm{P}(\{(1,1)\})$ is the orthogonal projection onto a subspace of $\mathcal{H}_0$. In order to see that this subspace is actually $\mathcal{H}_0$, let us consider the Fej\'er kernel:
\begin{align*}
F_N(w)=\sum_{n=-N}^{N}\left(1-\frac{|n|}{N+1}\right)w^n\quad\forall w\in\T.
\end{align*}
Then the sequence of functions $f_N\in C(\T^2)$ given by
\begin{align*}
f_N(z_1,z_2)=\frac{1}{N^2}F_N(z_1)F_N(z_2)\quad\forall z_1,z_2\in\T,
\end{align*}
satisfies
\begin{align*}
\sup_{N\geq 0}\sup_{z\in\T^2}|f_N(z)|\leq 4,
\end{align*}
and
\begin{align*}
f_N(z)\xrightarrow[N\to\infty]{}\begin{cases}
1, & z=(1,1),\\
0, & z\neq (1,1),
\end{cases}
\end{align*}
for all $z\in\T^2$; see e.g. \cite[Proposition 3.1.7]{Gra}. Therefore, by the dominated convergence theorem, for every $\xi\in\mathcal{H}$,
\begin{align*}
\mathrm{P}(\{(1,1)\})\xi =\lim_{N\to\infty}\Psi(f_N)\xi.
\end{align*}
Now assume that $\xi$ belongs to $\mathcal{H}_0$. Then
\begin{align*}
\mathrm{P}(\{(1,1)\})\xi 
&=\lim_{N\to\infty}\frac{1}{N^2}\sum_{n_1,n_2=-N}^{N}\left(1-\frac{|n_1|}{N+1}\right)\left(1-\frac{|n_2|}{N+1}\right)\pi(n_1,n_2)\xi\\
&=\lim_{N\to\infty}\frac{1}{N^2}\left(2N+1-\frac{2}{N+1}\frac{N(N+1)}{2}\right)^2\xi\\
&=\xi.
\end{align*}
We conclude that $\mathrm{P}(\{(1,1)\})$ is the orthogonal projection onto $\mathcal{H}_0$. This proves (a).\\
In order to prove (b), let us consider first the case of a continuous function $f\in C(\T^2)$. For $g\in\F_2$ and $z\in\T^2$, let us write
\begin{align*}
g\cdot f(z)=f(g^{-1}z),
\end{align*}
where the action of $\F_2$ on $\T^2$ comes from the natural action of $\operatorname{SL}_2(\Z)$ on $\R^2$. More explicitly, if
\begin{align*}
g=\left(\begin{array}{cc}
a & b \\ c & d
\end{array}\right),
\end{align*}
then
\begin{align}\label{g(z1,z2)}
g\left(z_1,z_2\right)=\left(z_1^az_2^{b},z_1^{c}z_2^d\right).
\end{align}
We have
\begin{align*}
\Psi(g\cdot f)=\sum_{n\in\Z^2}\widehat{g\cdot f}(n)\pi(n),
\end{align*}
where
\begin{align*}
\widehat{g\cdot f}(n)=\int_{\T^2}f(g^{-1}z)e_{-n}(z)\, dz,
\end{align*}
and $e_{-n}$ is defined as in \eqref{e_n}. Since the determinant of $g$ is 1, we obtain
\begin{align*}
\widehat{g\cdot f}(n)=\int_{\T^2}f(z)e_{-n}(gz)\, dz.
\end{align*}
On the other hand, from \eqref{g(z1,z2)}, one sees that
\begin{align*}
e_{-n}(gz)=e_{-g^\intercal n}(z).
\end{align*}
Therefore
\begin{align*}
\Psi(g\cdot f)=\sum_{n\in\Z^2}\hat{f}(g^\intercal n)\pi(n)=\sum_{n\in\Z^2}\hat{f}(n)\pi\left(\left(g^\intercal\right)^{-1}n\right).
\end{align*}
Replacing $g$ by $\left(g^\intercal\right)^{-1}$, and using the identity \eqref{pi(gn)},
\begin{align*}
\Psi\left(\left(g^\intercal\right)^{-1}\cdot f\right)=\sum_{n\in\Z^2}\hat{f}(n)\pi(g)\pi(n)\pi(g)^{-1}=\pi(g)\Psi(f)\pi(g)^{-1}.
\end{align*}
Now let $B\subseteq\T^2$ be a Borel subset. We can approximate $\mathds{1}_{B}$ by a sequence of continuous functions, and use the identity above to obtain
\begin{align*}
\mathrm{P}\left(\left(g^\intercal\right)^{-1}B\right)=\Psi\left((g^\intercal)^{-1}\cdot\mathds{1}_{B}\right)
=\pi(g)\Psi\left(\mathds{1}_{B}\right)\pi(g)^{-1}=\pi(g)\mathrm{P}(B)\pi(g)^{-1}.&\qedhere
\end{align*}
\end{proof}

\subsection{Shalom's argument revisited}
Shalom proved in \cite[Theorem 2.1]{Sha} that the Kazhdan constant of $(\operatorname{SL}_2(\Z)\ltimes\Z^2,\Z^2)$, associated to a generating set consisting of elementary matrices and the canonical basis of $\Z^2$, is at least $\frac{1}{10}$. We explain here how his argument can be adapted to our setting. Let $R^{\pm},T^{\pm}\in\F_2$ be defined as
\begin{align}\label{R+-T+-}
R^{\pm}&=\left(\begin{array}{cc}
1 & \pm 2\\ 0 & 1
\end{array}\right), &
T^{\pm}&=\left(\begin{array}{cc}
1 & 0\\ \pm 2 & 1
\end{array}\right),
\end{align}
and let $\pm e_1, \pm e_2\in\Z^2$ be
\begin{align*}
\pm e_1&=\left(\begin{array}{c}
\pm 1 \\ 0
\end{array}\right), &
\pm e_2&=\left(\begin{array}{c}
0 \\ \pm 1
\end{array}\right).
\end{align*}
We consider the symmetric generating subset of $\F_2\ltimes\Z^2$ given by
\begin{align}\label{gen_sset_sdp}
S=\left\{R^{\pm}, T^{\pm}, \pm e_1, \pm e_2\right\},
\end{align}
where $\F_2$ and $\Z^2$ are naturally embedded in $\F_2\ltimes\Z^2$.

The first step consists in considering a unitary representation of $\F_2\ltimes\Z^2$, and translating the almost invariance of a vector into the almost invariance of a certain probability measure on $\T^2$.

\begin{lem}\label{Lem_exist_pm}
Let $\pi:\F_2\ltimes\Z^2\to\mathbf{U}(\mathcal{H})$ be a unitary representation without nontrivial $\Z^2$-invariant vectors. Let $\varepsilon\in\big(0,\frac{1}{\sqrt{2}}\big)$, and assume that there is a unit vector $\xi\in\mathcal{H}$ such that
\begin{align*}
\max_{g\in S}\|\pi(g)\xi-\xi\|<\varepsilon,
\end{align*}
where $S$ is the subset defined in \eqref{gen_sset_sdp}. Then there is a Borel probability measure $\nu$ on ${\big(-\tfrac{1}{2},\tfrac{1}{2}\big]^2}$, supported on $\big(-\frac{1}{6},\frac{1}{6}\big)^2\setminus\{(0,0)\}$, such that, for every Borel set $B\subseteq\big(-\tfrac{1}{2},\tfrac{1}{2}\big]^2$ and every $g\in\left\{R^{\pm}, T^{\pm}\right\}$,
\begin{align}\label{nu_almost_inv}
\left|\nu(gB)-\nu(B)\right|<\frac{2(\varepsilon+\varepsilon^2)}{1-2\varepsilon^2},
\end{align}
where we identify $\big(-\tfrac{1}{2},\tfrac{1}{2}\big]^2\cong \R^2/\big(\Z^2+\big(\frac{1}{2},\frac{1}{2}\big)\big)\cong\T^2$.
\end{lem}
\begin{proof}
Let $\mathrm{P}$ be the projection-valued measure associated to the restriction of $\pi$ to $\Z^2$, defined as in \eqref{P(B)=Psi}. Let $\mu_\xi$ be the Borel probability measure on $\T^2$ given by
\begin{align*}
\mu_\xi(B)=\langle\mathrm{P}(B)\xi,\xi\rangle,
\end{align*}
which we view as a measure on $\big(-\tfrac{1}{2},\tfrac{1}{2}\big]^2$ under the identification described above. Since $\pi$ does not have $\Z^2$-invariant vectors, by Proposition \ref{Prop_pvm}.(a),
\begin{align}\label{mu_xi(0)}
\mu_\xi(\{(0,0)\})=0.
\end{align}
Let $\Omega=\big(-\frac{1}{6},\frac{1}{6}\big)^2$. We claim that $\mu_\xi(\Omega)\geq 1-2\varepsilon^2$. Indeed, observe that
\begin{align*}
\int\limits_{\left(-\frac{1}{2},\frac{1}{2}\right]^2}\left|1-e^{\pm 2\pi\ii x}\right|^2\,d\mu_\xi(x,y)
&= 2-2\mathfrak{R}\left(\int\limits_{\left(-\frac{1}{2},\frac{1}{2}\right]^2}e^{\pm 2\pi\ii x}\,d\mu_\xi(x,y)\right)\\
&= \|\xi\|^2+\|\pi(\pm e_1)\xi\|^2-2\mathfrak{R}\left(\langle\pi(\pm e_1)\xi,\xi\rangle\right)\\
&= \|\pi(\pm e_1)\xi-\xi\|^2\\
&< \varepsilon^2.
\end{align*}
We warn the reader that we have used the same notation for the number $\pi$ ($e^{\pi\ii}=-1$), and for the representation $\pi:\F_2\ltimes\Z^2\to\mathbf{U}(\mathcal{H})$, hoping that the difference will be clear from the context. Now, if $\frac{1}{6}\leq|x|\leq\frac{1}{2}$,
\begin{align*}
\left|1-e^{\pm 2\pi\ii x}\right|^2 &= (1-\cos(2\pi x))^2+\sin(2\pi x)^2\\
&=2-2\cos(2\pi x)\\
&=4\sin(\pi x)^2\\
&\geq 1,
\end{align*}
which shows that
\begin{align*}
\mu_\xi\left(\left\{(x,y)\in \big(-\tfrac{1}{2},\tfrac{1}{2}\big]^2\ \mid\ |x|\geq\tfrac{1}{6}\right\}\right)
\leq \int\limits_{\left(-\frac{1}{2},\frac{1}{2}\right]^2}\left|1-e^{\pm 2\pi\ii x}\right|^2\,d\mu_\xi(x,y) < \varepsilon^2.
\end{align*}
Similarly, we find
\begin{align*}
\mu_\xi\left(\left\{(x,y)\in \big(-\tfrac{1}{2},\tfrac{1}{2}\big]^2\ \mid\ |y|\geq\tfrac{1}{6}\right\}\right)
 < \varepsilon^2.
\end{align*}
This shows that
\begin{align}\label{mu_xi(Omega)}
\mu_\xi(\Omega)>1-2\varepsilon^2.
\end{align}
On the other hand, by Proposition \ref{Prop_pvm}.(b), if $g\in\left\{R^{\pm}, T^{\pm}\right\}$ and $B\subseteq\big(-\tfrac{1}{2},\tfrac{1}{2}\big]^2$ is any Borel subset,
\begin{align*}
\left|\mu_\xi\left(g^\intercal B\right)-\mu_\xi(B)\right|
&= \left|\langle\pi(g)^{-1}\mathrm{P}(B)\pi(g)\xi,\xi\rangle-\langle\mathrm{P}(B)\xi,\xi\rangle\right|\\
&\leq \left|\langle\mathrm{P}(B)\pi(g)\xi,\pi(g)\xi-\xi\rangle\right|+\left|\langle\mathrm{P}(B)(\pi(g)\xi-\xi),\xi\rangle\right|\\
&\leq 2\|\pi(g)\xi-\xi\|\\
&< 2\varepsilon.
\end{align*}
Since the set $\left\{R^{\pm}, T^{\pm}\right\}$ is stable under taking transpose, we get
\begin{align}\label{mu_xig-mu_xi}
\left|\mu_\xi\left(g B\right)-\mu_\xi(B)\right|< 2\varepsilon,
\end{align}
for every $g\in\left\{R^{\pm}, T^{\pm}\right\}$. Now define a new measure $\mu$ on $\big(-\tfrac{1}{2},\tfrac{1}{2}\big]^2$ by
\begin{align}\label{mu=mu_xi_cap}
\mu(B)=\mu_\xi(B\cap\Omega).
\end{align}
By \eqref{mu_xig-mu_xi} and \eqref{mu_xi(Omega)}, for every Borel subset $B\subseteq\big(-\tfrac{1}{2},\tfrac{1}{2}\big]^2$ and every $g\in\left\{R^{\pm}, T^{\pm}\right\}$,
\begin{align*}
\mu(gB)-\mu(B) &= \mu(gB)-\mu_\xi(gB)+\mu_\xi(gB)-\mu_\xi(B)+\mu_\xi(B)-\mu(B)\\
&< 0+2\varepsilon+2\varepsilon^2\\
&= 2(\varepsilon+\varepsilon^2).
\end{align*}
Since this holds for every Borel subset, we have
\begin{align*}
|\mu(gB)-\mu(B)| < 2(\varepsilon+\varepsilon^2).
\end{align*}
Now, defining a new probability measure $\nu$ by
\begin{align*}
\nu(B)=\frac{\mu(B)}{\mu(\Omega)},
\end{align*}
we get, for every Borel subset $B\subseteq\big(-\tfrac{1}{2},\tfrac{1}{2}\big]^2$ and every $g\in\left\{R^{\pm}, T^{\pm}\right\}$,
\begin{align*}
|\nu(gB)-\nu(B)| < \frac{2(\varepsilon+\varepsilon^2)}{1-2\varepsilon^2}.
\end{align*}
Finally, by \eqref{mu_xi(0)} and \eqref{mu=mu_xi_cap}, $\nu$ is supported on $\big(-\frac{1}{6},\frac{1}{6}\big)^2\setminus\{(0,0)\}$.
\end{proof}

\begin{rmk}\label{Rmk_supp_nu}
The reason why it is important that the measure constructed in Lemma \ref{Lem_exist_pm} is supported on $\Omega=\big(-\frac{1}{6},\frac{1}{6}\big)^2$ is that, for every $g\in\left\{R^{\pm}, T^{\pm}\right\}$,
\begin{align*}
g\Omega\subseteq \left(-\tfrac{1}{2},\tfrac{1}{2}\right)^2
\end{align*}
for the action of $\F_2$ on $\R^2$. Hence, if we now view $\nu$ as a measure on $\R^2$ whose support is in $\Omega$, then the estimate \eqref{nu_almost_inv} is still valid for any Borel subset $B\subseteq\R^2$. Indeed, let $q:\R^2\to\big(-\tfrac{1}{2},\tfrac{1}{2}\big]^2$ denote the quotient map, and let $\tilde{\nu}$ be the extension of $\nu$ to $\R^2$ by $0$. Then
\begin{align*}
\tilde{\nu}(B)=\nu(q(B\cap\Omega))
\end{align*}
for every Borel subset $B\subseteq\R^2$. Moreover, if $\sigma$ denotes the induced action of $\F_2$ on the quotient,
\begin{align*}
\tilde{\nu}(gB)=\nu(q(gB\cap\Omega))=\nu\left(\sigma(g)q\left(B\cap g^{-1}\Omega\right)\right),
\end{align*}
for every $g\in\F_2$. On the other hand,
\begin{align*}
\tilde{\nu}(B) \geq \tilde{\nu}\left(B\cap g^{-1}\Omega\right) 
= \nu\left(q\left(B\cap g^{-1}\Omega\cap \Omega\right)\right).
\end{align*}
Therefore, if $g\in\left\{R^{\pm}, T^{\pm}\right\}$, then
\begin{align*}
\tilde{\nu}(B) \geq \nu\left(q\left(B\cap g^{-1}\Omega\right)\right),
\end{align*}
because $g^{-1}\Omega\subseteq \big(-\tfrac{1}{2},\tfrac{1}{2}\big)^2$ and $\nu$ is supported on $q(\Omega)$. Hence, by Lemma \ref{Lem_exist_pm},
\begin{align*}
\tilde{\nu}(gB)-\tilde{\nu}(B) 
&\leq \nu\left(\sigma(g)q\left(B\cap g^{-1}\Omega\right)\right) - \nu\left(q\left(B\cap g^{-1}\Omega\right)\right)\\
&< \frac{2(\varepsilon+\varepsilon^2)}{1-2\varepsilon^2}.
\end{align*}
Since this holds for every Borel subset $B\subseteq\R^2$ and every $g\in\left\{R^{\pm}, T^{\pm}\right\}$, we conclude that
\begin{align*}
\left|\tilde{\nu}(gB)-\tilde{\nu}(B)\right|
< \frac{2(\varepsilon+\varepsilon^2)}{1-2\varepsilon^2}.
\end{align*}
\end{rmk}

Now we prove a general result for measures on $\R^2$. This will give us a restriction on how small $\varepsilon$ can be in the context of Lemma \ref{Lem_exist_pm}.

\begin{lem}\label{Lem_nu_1/4}
Let $\nu$ be a Borel probability measure on $\R^2\setminus\{(0,0)\}$. Then there are $g\in\left\{R^{\pm}, T^{\pm}\right\}$ and a Borel subset $W\subseteq\R^2\setminus\{(0,0)\}$ such that
\begin{align*}
|\nu(gW)-\nu(W)| \geq \frac{1}{4},
\end{align*}
where $R^{\pm}, T^{\pm}$ are defined as in \eqref{R+-T+-}.
\end{lem}
\begin{proof}
Assume by contradiction that
\begin{align}\label{nu<1/4}
|\nu(gW)-\nu(W)| < \frac{1}{4},
\end{align}
for every Borel subset $W\subseteq\R^2\setminus\{(0,0)\}$ and every $g\in\left\{R^{\pm}, T^{\pm}\right\}$. Let us divide $\R^2\setminus\{(0,0)\}$ into 6 regions whose boundaries are given by the lines $y=0$, $y=\frac{1}{2}x$, $y=2x$, $x=0$, $y=-2x$, $y=-\frac{1}{2}x$. We let $A$ be the region comprised between the lines $y=0$ (included) and $y=\frac{1}{2}x$ (not included). We define $B,C,D,E,F$ analogously, following the counter-clockwise order; see Figure \ref{Fig_plane6}. We have
\begin{align*}
\R^2\setminus\{(0,0)\}=A\sqcup B\sqcup C\sqcup D\sqcup E\sqcup F.
\end{align*}
\begin{figure}[h]
\includegraphics[scale=1.5]{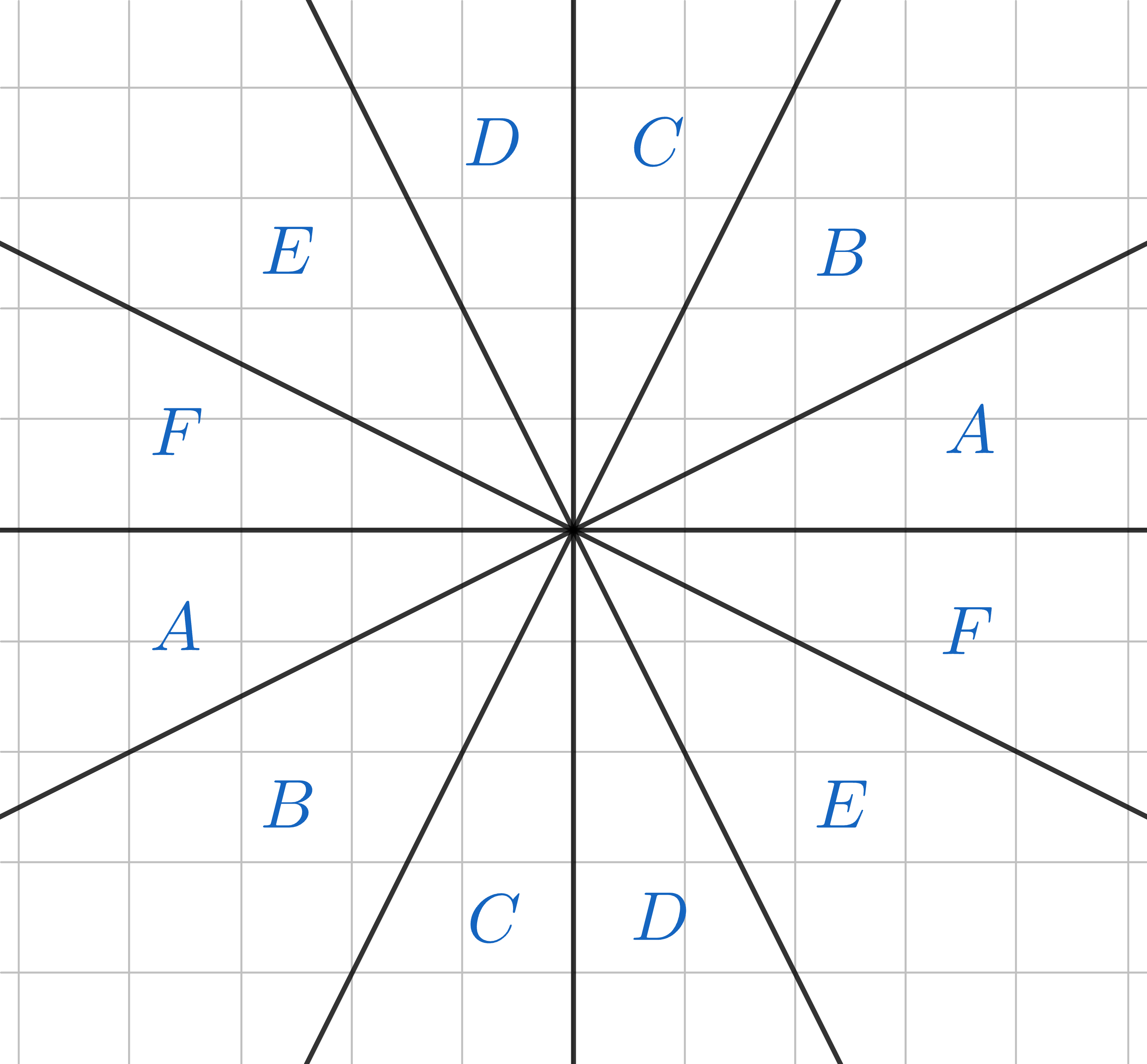}
\caption{Partition of $\R^2\setminus\{(0,0)\}$ into the regions $A,B,C,D,E,F$.}
\label{Fig_plane6}
\end{figure}

\noindent Observe that
\begin{align*}
R^+(A\cup B\cup C)&=A, & T^+(A\cup B\cup C)&=C,\\
R^{-}(D\cup E\cup F)&=F, & T^{-}(D\cup E\cup F)&=D.
\end{align*}
By \eqref{nu<1/4}, together with the identities above,
\begin{align*}
\nu(B)+\nu(C)&<\frac{1}{4}, & \nu(A)+\nu(B)&<\frac{1}{4},\\
\nu(D)+\nu(E)&<\frac{1}{4}, & \nu(E)+\nu(F)&<\frac{1}{4}.
\end{align*}
Summing all these inequalities, and using the fact that
\begin{align*}
\nu(A)+\nu(B)+\nu(C)+\nu(D)+\nu(E)+\nu(F)=1,
\end{align*}
we obtain
\begin{align*}
1+\nu(B)+\nu(E)<1,
\end{align*}
which is impossible.
\end{proof}

Now we combine Lemmas \ref{Lem_exist_pm} and \ref{Lem_nu_1/4} in order to find a lower bound for the Kazhdan constant of the pair $(\F_2\ltimes\Z^2,\Z^2)$ for the generating set \eqref{gen_sset_sdp}.

\begin{cor}\label{Cor_Kazh_sdp}
Let $S$ be the subset of $\F_2\ltimes\Z^2$ defined in \eqref{gen_sset_sdp}. Then
\begin{align*}
\kappa(\F_2\ltimes\Z^2,\Z^2,S)\geq \frac{\sqrt{26}-4}{10}\quad (\approx 0.11),
\end{align*}
where $\kappa$ is defined as in \eqref{Kazh_const}.
\end{cor}
\begin{proof}
Assume by contradiction that
\begin{align*}
\kappa(\F_2\ltimes\Z^2,\Z^2,S) < \frac{\sqrt{26}-4}{10}.
\end{align*}
Then there exist $\varepsilon<\frac{\sqrt{26}-4}{10}$, a unitary representation $\pi:\F_2\ltimes\Z^2\to\mathbf{U}(\mathcal{H})$ without nontrivial $\Z^2$-invariant vectors, and a unit vector $\xi\in\mathcal{H}$ such that
\begin{align*}
\max_{g\in S}\|\pi(g)\xi-\xi\|<\varepsilon.
\end{align*}
By Lemma \ref{Lem_exist_pm}, there is a Borel probability measure $\nu$ on $\big(-\tfrac{1}{2},\tfrac{1}{2}\big]^2$, supported on ${\big(-\tfrac{1}{6},\tfrac{1}{6}\big)^2}\setminus\{(0,0)\}$, such that, for every Borel set $B\subseteq\big(-\tfrac{1}{2},\tfrac{1}{2}\big]^2$ and every $g\in\left\{R^{\pm}, T^{\pm}\right\}$,
\begin{align*}
|\nu(gB)-\nu(B)| < \frac{2(\varepsilon+\varepsilon^2)}{1-2\varepsilon^2}.
\end{align*}
Extending $\nu$ by $0$, we may view it as a measure on $\R^2\setminus\{(0,0)\}$; see Remark \ref{Rmk_supp_nu}. Hence, by Lemma \ref{Lem_nu_1/4}, we have
\begin{align*}
\frac{2(\varepsilon+\varepsilon^2)}{1-2\varepsilon^2}>\frac{1}{4},
\end{align*}
which is equivalent to
\begin{align*}
10\varepsilon^2+8\varepsilon-1> 0.
\end{align*}
Since the roots of the polynomial $10x^2+8x-1$ are
\begin{align*}
\frac{-\sqrt{26}-4}{10}\qquad\text{and}\qquad\frac{\sqrt{26}-4}{10},
\end{align*}
this cannot happen for $\varepsilon<\frac{\sqrt{26}-4}{10}$. We conclude that
\begin{align*}
\kappa(\F_2\ltimes\Z^2,\Z^2,S)\geq \frac{\sqrt{26}-4}{10}.&\qedhere
\end{align*}
\end{proof}

Now we have all the necessary ingredients for the proof of Theorem \ref{Thm_F2xZ2}.

\begin{proof}[Proof of Theorem \ref{Thm_F2xZ2}]
Let $\sigma:\F_2\ltimes\Z^2\to\operatorname{BiLip}_+^{\mathrm{bd}}(\R)$ be an injective homomorphism such that $\sigma(\Z^2)$ does not have global fixed points. Let $S=\{R,T,e_1,e_2\}$ be the generating set defined in \eqref{gen_set_intro}. Observe that $\tilde{S}=S\cup S^{-1}$ is exactly the symmetric generating set \eqref{gen_sset_sdp}. By Theorem \ref{Thm_T_biLip},
\begin{align*}
\max_{g\in \tilde{S}}\operatorname{BiLip}(\sigma(g))\geq \Phi\left(\kappa\left(\F_2\ltimes\Z^2,\Z^2,\tilde{S}\right)\right),
\end{align*}
where $\Phi$ is defined as in \eqref{def_Phi}. Now observe that
\begin{align*}
\max_{g\in \tilde{S}}\operatorname{BiLip}(\sigma(g))=\max_{g\in S}\operatorname{BiLip}(\sigma(g))
\end{align*}
simply because
\begin{align*}
\operatorname{BiLip}(\sigma(g^{-1}))=\operatorname{BiLip}(\sigma(g)).
\end{align*}
%
%
%
Hence, by Corollary \ref{Cor_Kazh_sdp}, together with the fact that $\Phi$ is increasing,
\begin{align*}
\max_{g\in S}\operatorname{BiLip}(\sigma(g))\geq \Phi\left(\frac{\sqrt{26}-4}{10}\right)
= \exp\left(\frac{\sqrt{26}-4}{5}\right).&\qedhere
\end{align*}
\end{proof}

\section{Consequence for orderable groups}\label{S_ord}
We end this paper with the proof of Corollary \ref{Cor_ord_Kazh}. The main ingredient is the following result, which was essentially proved in \cite[Theorem 8.5]{DKNP}.

\begin{thm}[Deroin--Kleptsyn--Navas--Parwani]\label{Thm_ord_BiLp}
Let $G$ be a finitely generated, orderable group. Let $S$ be a finite, symmetric generating subset of $G$. Then $G$ is isomorphic to a subgroup of $\operatorname{BiLip}_+^{\mathrm{bd}}(\R)$ without global fixed points. Moreover, for every $g\in S$,
\begin{align}\label{Lip(g)<S}
\operatorname{BiLip}(g)\leq |S|.
\end{align}
\end{thm}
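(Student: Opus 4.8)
This is \cite[Theorem~8.5]{DKNP} up to the explicit constant, so the plan is to recall that construction and keep track of the distortion. Since $G$ is finitely generated and left-orderable, fix a left order $\prec$ and form its dynamical realisation: this is a faithful action of $G$ on $\R$ by orientation-preserving homeomorphisms in which the orbit of a basepoint is order-isomorphic to $(G,\prec)$ and can be taken cofinal in both directions (a nontrivial left-ordered group has neither a smallest nor a largest element, since left multiplications are order automorphisms), so the action has no global fixed point; see \cite{DeNaRi}. What the authors of \cite{DKNP} add is that the action can be improved, by a change of coordinates, to take values in $\operatorname{BiLip}_+^{\mathrm{bd}}(\R)$: one produces a non-atomic, fully supported Radon measure $\nu$ on $\R$, lying in the Lebesgue measure class with $\nu((-\infty,x_0])=\nu([x_0,\infty))=\infty$, which is \emph{stationary} for the random walk on $G$ driven by a symmetric, finitely supported probability measure $\mu$ generating $G$, and then replaces each $\gamma$ by its conjugate $\tilde\gamma=\Psi\circ\gamma\circ\Psi^{-1}$ under the increasing homeomorphism $\Psi\colon\R\to\R$ with $\Psi_*\nu=\mathrm{Leb}$ (so $\Psi(x)-\Psi(y)=\nu([y,x])$ for $y\le x$). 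Conjugation preserves the absence of global fixed points, and the choice of $\nu$ ensures $\sup_{x\in\R}\nu([x,\gamma x])=\sup_{y}|\tilde\gamma(y)-y|<\infty$ for every $\gamma$, i.e.\ bounded displacement.

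To get \eqref{Lip(g)<S}, I would carry this out with $\mu$ equal to the uniform probability measure on $S$, which is legitimate since $S$ is symmetric and generates $G$. Stationarity then says
\begin{align*}
\nu=\mu*\nu=\frac{1}{|S|}\sum_{s\in S}s_*\nu .
\end{align*}
Each $s_*\nu$ is a nonnegative measure, mutually absolutely continuous with $\nu$ (the action preserves the Lebesgue class and $\nu$ is fully supported), so differentiating with respect to $\nu$ gives $\frac{1}{|S|}\sum_{s\in S}\frac{d(s_*\nu)}{d\nu}=1$ almost everywhere, hence
\begin{align*}
\frac{d(s_*\nu)}{d\nu}\le|S|\qquad\text{for every }s\in S.
\end{align*}
Since $\Psi$ transports $\nu$ to Lebesgue measure, a change of variables identifies $\frac{d(\tilde s_*\mathrm{Leb})}{d\mathrm{Leb}}$ both with $D(\tilde s^{-1})$ and with $\big(\frac{d(s_*\nu)}{d\nu}\big)\circ\Psi^{-1}$, so $\|D\tilde s^{-1}\|_\infty\le|S|$. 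Applying this to $s$ and to $s^{-1}\in S$ and invoking Remark~\ref{Rmk_Lip_D},
\begin{align*}
\operatorname{Lip}(\tilde s)=\max\big\{\|D\tilde s\|_\infty,\ \|D\tilde s^{-1}\|_\infty\big\}\le|S|\qquad\text{for all }s\in S,
\end{align*}
which is \eqref{Lip(g)<S}.

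The Radon--Nikodym computation in the second step is routine once the measure is in hand; the real difficulty is the first step, namely the existence of a stationary measure $\nu$ that is simultaneously Radon, non-atomic, fully supported, infinite on both tails, and compatible with bounded displacement. This is the technical heart of \cite[\S8]{DKNP}, and I would appeal to it directly, the only new observation being that their construction applies verbatim with $\mu$ uniform on $S$, which is precisely what turns the cocycle estimate into the constant $|S|$.
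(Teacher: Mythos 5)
Your proposal follows essentially the same route as the paper. The paper's own proof is a chain of three citations: the dynamical realisation of a left order gives a fixed-point-free action on $\R$ via \cite[Proposition 1.1.8]{DeNaRi}; \cite[Theorem 8.5]{DKNP} conjugates this into $\operatorname{BiLip}_+^{\mathrm{bd}}(\R)$; and the bound $\operatorname{Lip}(g)\le|S|$ comes from running the proof of \cite[Proposition 8.4]{DKNP} with the uniform probability measure on $S$. What you have done is unpack the black box: you recall that DKNP's construction hinges on a stationary Radon measure $\nu$ in the Lebesgue class conjugated to Lebesgue by $\Psi$, then observe that stationarity $\nu=\frac{1}{|S|}\sum_{s\in S}s_*\nu$ forces each Radon--Nikodym derivative $\frac{d(s_*\nu)}{d\nu}$ to be bounded by $|S|$, which after transport by $\Psi$ gives $\|D\tilde s^{-1}\|_\infty\le|S|$, and symmetry of $S$ together with Remark~\ref{Rmk_Lip_D} upgrades this to $\operatorname{Lip}(\tilde s)\le|S|$. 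The logic of the Radon--Nikodym and change-of-variables steps checks out, and your reconstruction correctly isolates why the constant is exactly $|S|$. The one caveat, which you yourself flag, is that the heavy lifting (existence of a suitable stationary measure with infinite tails and bounded displacement for the conjugated action) is still delegated to \cite[\S 8]{DKNP}, just as in the paper; but that is the appropriate level of detail for a result attributed to those authors.
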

\begin{proof}
By \cite[Proposition 1.1.8]{DeNaRi}, we can identify $G$ with a subgroup of $\operatorname{Homeo}_+(\R)$ without global fixed points. By \cite[Theorem 8.5]{DKNP}, it can be conjugated to a subgroup of $\operatorname{BiLip}_+^{\mathrm{bd}}(\R)$. Moreover, by considering the probability measure
\begin{align*}
\mu(g)=\begin{cases}
\frac{1}{|S|}& \text{if }g\in S,\\
0& \text{otherwise},
\end{cases}
\end{align*}
the proof of \cite[Theorem 8.5]{DKNP} produces an action satisfying
\begin{align*}
\operatorname{BiLip}(g)\leq\frac{1}{\mu(g)}=|S|
\end{align*}
for every $g\in S$.
\end{proof}

With this, we can obtain bounds for the Kazhdan constants of orderable groups.

\begin{proof}[Proof of Corollary \ref{Cor_ord_Kazh}]
Let $G$ be a finitely generated orderable group, $\Gamma$ a subgroup of $G$, and $S$ a finite, symmetric generating subset of $G$. By Theorem \ref{Thm_ord_BiLp}, $G$ is isomorphic to a subgroup of $\operatorname{BiLip}_+^{\mathrm{bd}}(\R)$ without global fixed points and such that $\operatorname{BiLip}(g)\leq |S|$ for every $g\in S$. Hence, by Theorem \ref{Thm_T_biLip},
\begin{align*}
\kappa(G,\Gamma,S)\leq\Phi^{-1}(|S|).&\qedhere
\end{align*}
\end{proof}





\bibliographystyle{plain} 

\bibliography{Bibliography}

\end{document}